\def\congruent{\equiv}
\def\intZ{ \mathbb{Z} }
\def\ratQ{ \mathbb{Q} }
\def\notdiv{\nmid}
\numberwithin{equation}{section}
\newtheorem{Theorem}{Theorem}
\newtheorem{Corollary}{Corollary}
\newtheorem{lemma}{Lemma}
\newtheorem{Observation}{Observation}
\newtheorem*{Observation*}{Observation A}
\def\Legendre#1#2{\left( {#1 \over #2} \right)}
\def\German{\mathfrak}
\newcommand*\conj[1]{\overline{#1}}
\def\bft{{\bf t}}
\def\bfs{{\bf s}}
\begin{document}

\title{Two terms with known prime divisors adding to a power}

\author[1]{Reese Scott}
\affil[1]{Somerville, MA, USA}
\author[2]{Robert Styer}
\affil[2]{Villanova University, Villanova, PA}



\maketitle

Let $c$ be a positive odd integer and $R$ a set of $n$ primes coprime with $c$. We
consider equations $X + Y = c^z$ in three integer unknowns $X$, $Y$, $z$, where
$z > 0$, $Y > X > 0$, and the primes dividing $XY$ are precisely those in
$R$. We consider $N$, the number of solutions of such an equation. Given a
solution $(X, Y, z)$, let $D$ be the least positive integer such that $(XY/D)^{1/2}$
is an integer. Further, let $\omega$ be the number of distinct primes dividing $c$.
Standard elementary approaches use an upper bound of $2^n$ for the number of
possible $D$, and an upper bound of $2^{\omega-1}$ for the number of 
ideal factorizations of $c$ in the field $\ratQ(\sqrt{-D})$ which can correspond (in a standard designated way) to a solution in which $(XY/D)^{1/2} \in \intZ$, and obtain $N \le 2^{n+\omega-1}$.  Here we improve this by finding an inverse proportionality relationship between a bound on the number of $D$ which can occur in solutions and a bound (independent of $D$) on the number of ideal factorizations of $c$ which can correspond to solutions for a given $D$.  We obtain $N \le 2^{n-1}+1$.  This bound is precise for $n<4$: there are several cases with exactly $2^{n-1} + 1$ solutions.  For higher values of $n$ the bound becomes unrealistic, but is nevertheless an improvement on bounds obtained previously by both elementary and non-elementary methods.  

\section{Introduction}   

In this paper we derive an upper bound on $N$, the number of solutions in integers $(X,Y,z)$ with $Y>X>0$ and $z>0$   to the equation 
$$ X+Y=c^z, $$
where $c$ is a fixed positive odd integer, $\gcd(XY,c) = 1$, and the set of primes in the factorization of $XY$ is prechosen.  Previous work on this problem includes both strictly elementary treatments and deeper, non-elementary approaches. 

The most common type of non-elementary approach uses results on S-unit equations to obtain a bound which is exponential in $s$, where $s$ is the number of primes dividing $XYc$.  A familiar general result of Evertse \cite{E} shows that there are at most $\exp(4s+6)$ solutions to the equation $x+y=z$ in coprime positive integers $(x,y,z)$ each composed of primes from a given set of $s$ primes.  It follows from a result of Beukers and Schlickewei \cite{BS} that $X + Y = c^z$ has at most $2^{16n+16}$ solutions where $n$ is the number of primes dividing $XY$.    

Treatments using strictly elementary methods take advantage of the fact that $c^z$ is a perfect power, and thus are often sharper than those obtained by more general non-elementary methods, especially when $c$ is divisible by few primes.  These elementary bounds are dependent not only on $n$, where $n$ is the number of distinct primes dividing $XY$, but also on $\omega$, where $\omega$ is the number of distinct primes dividing $c$.  Examples of such results are found in \cite{Le}, \cite{Cao}, and \cite{CCS}.  In this paper we show that strictly elementary methods can be used to obtain a bound which is independent of $\omega$ (note that the bound of $2^{16n+16}$ derived from the non-elementary result of Beukers and Schlickewei \cite{BS} is also independent of $\omega$).    
We will prove

\begin{Theorem}  
Let $n \ge 1$ and let $c$, $d_1$, \dots, $d_n$ be integers greater than 1 such that $c$ is odd and $d_1$, \dots, $d_n$ are coprime with $c$.  
Let $N$ be the number of solutions $(X, Y, z)$ to the equation 
$$ X + Y = c^z,   \eqno{(1.1)} $$
where $z \in \intZ$, $X = \prod_{i=1}^n d_i^{x_i}$, $Y = \prod_{i=1}^n d_i^{y_i}$, $x_i, y_i \in \intZ$, $\max(x_i, y_i) > 0$, $\min(x_i, y_i) = 0$, and $X<Y$.  

Then $N \le 2^{n-1} + 1$.

If the set $\{ \log(d_1), \log(d_2), \allowbreak \dots, \allowbreak  \log(d_n) \}$ is linearly independent over $\intZ$, then, letting $N_1$ be the number of solutions $(x_1, x_2, \dots, x_n, y_1, y_2, \dots, y_n, z)$ to (1.1), we have $N_1 \le 2^{n-1} + 1$.  
\end{Theorem}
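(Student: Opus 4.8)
The plan is to reduce everything to the estimate $N\le 2^{n-1}+1$. For any solution $(X,Y,z)$, the integer $\gcd(X,Y)$ divides both $X+Y=c^z$ and $XY$; since $\gcd(XY,c)=1$, this forces $\gcd(X,Y)=1$. For the statement on $N_1$, assume $\{\log d_1,\dots,\log d_n\}$ is $\intZ$-linearly independent; then $\prod_i d_i^{a_i}=\prod_i d_i^{b_i}$ forces $a_i=b_i$ for all $i$, so a triple $(X,Y,z)$ determines its exponent vector $(x_1,\dots,x_n,y_1,\dots,y_n,z)$ uniquely and $N_1=N$. Hence it suffices to bound $N$.

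\medskip
\noindent\textbf{Descent to imaginary quadratic fields.} To a solution $s=(X,Y,z)$ I would attach the squarefree integer $D=D(s)$ equal to the product of the primes dividing $XY$ to an odd exponent, so that $XY=Dw^2$ with $w\in\intZ_{>0}$; note $D$ is the squarefree kernel of $\prod_i d_i^{\max(x_i,y_i)}$ and therefore takes at most $2^n$ values. From $4XY=(X+Y)^2-(Y-X)^2$ one gets $(Y-X)^2+4Dw^2=c^{2z}$, i.e. $\beta\conj\beta=c^{2z}$ in $\German{O}$, the ring of integers of $K_D:=\ratQ(\sqrt{-D})$, where $\beta:=(Y-X)+2w\sqrt{-D}$. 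Using $\gcd(X,Y)=1$ one checks that $\gcd\big((\beta),(\conj\beta)\big)$ divides $(4)$, hence is prime to $(c)$; since moreover no prime dividing $c$ ramifies in $K_D$, this coprimality together with $\conj{(\beta)}=(\conj\beta)$ forces every rational prime $p\mid c$ to \emph{split} in $K_D$. Writing $(c)=\prod_{p\mid c}\German{p}_p^{v_p(c)}\conj{\German{p}_p}^{v_p(c)}$, the ideal $(\beta)$ is then a product $\German{a}^z$, where $\German{a}$ is got by choosing, for each $p\mid c$, one of $\German{p}_p,\conj{\German{p}_p}$ (the global involution $\beta\mapsto\conj\beta$, normalised away by $Y>X$, identifies a choice with its total conjugate); moreover $\German{a}^z$ must be principal with a generator in the sublattice $\intZ+2\sqrt{-D}\,\intZ$. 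This yields a map from solutions to pairs $\big(D,\text{admissible factorization of }c\text{ for }D\big)$.

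\medskip
\noindent\textbf{Counting and the inverse-proportionality step.} The contact with the equation $x^2+Dy^2=c^z$ is that two solutions with the same $D$ and the same admissible factorization have their $\beta$'s equal to powers of one ideal $\German{a}$; comparing these powers in $\German{O}^\times$ (which is $\{\pm1\}$ except for $D\in\{1,3\}$) and using $Y>X$ should show the two solutions coincide, save for a single exceptional configuration over the whole family (say one in which some $(D,\German{a})$ carries two solutions, or one with $XY$ a perfect square), which contributes the additive $1$. Thus $N\le \#\mathcal{D}\cdot\max_{D}F(D)+1$, with $\mathcal{D}$ the set of occurring $D$ and $F(D)$ the number of admissible factorizations for $D$. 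Now the crude bounds $\#\mathcal{D}\le 2^{n}$ and $F(D)\le 2^{\omega-1}$ must be replaced by an \emph{inverse-proportionality} estimate. The condition ``$p$ splits in $K_{D_T}$'' reads $\psi_p(T):=\Legendre{-1}{p}\prod_{i\in T}\Legendre{d_i}{p}=1$, an affine-linear condition on the subset $T\subseteq\{1,\dots,n\}$ that halves $\mathcal{D}$ for each genuinely new constraint, so if $k$ of these are independent then $\#\mathcal{D}\le 2^{n-k}$. On the other hand, primes of $c$ imposing the \emph{same} splitting condition (those for which the characters $i\mapsto\Legendre{d_i}{p}$ agree) cannot give independent factorization choices — the sign-choices at such primes are coupled through genus theory together with the lattice/principality constraint — so $F(D)\le 2^{k-1}$ (the degenerate small-$k$ cases handled separately). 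Multiplying, $\#\mathcal{D}\cdot\max_D F(D)\le 2^{n-1}$, whence $N\le 2^{n-1}+1$.

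\medskip
\noindent\textbf{Where the difficulty lies.} The reductions, the descent to $\beta\conj\beta=c^{2z}$, and the derivation that every prime of $c$ splits in $K_D$ are routine. The two hard points are: (i) the ``fundamental solution'' statement — that each admissible pair $(D,\German{a})$ supports essentially one solution, with exactly one exception permitted over the whole family — which is a gap-type estimate on the exponents $z$ needing separate checks for $D\in\{1,3\}$ and for the primes sharing a splitting condition; and (ii), the real crux, the inverse-proportionality inequality $\#\mathcal{D}\cdot\max_D F(D)\le 2^{n-1}$: one must prove, uniformly over solutions, that every redundancy among the $\omega$ splitting-conditions which inflates the count of admissible $D$ is exactly compensated by a coupling among the corresponding factorization choices, and must treat by hand the configurations with few independent splitting-conditions. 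It is precisely this balancing that removes the dependence on $\omega$.
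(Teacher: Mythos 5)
Your outline correctly identifies the two load-bearing steps, but neither is actually proved, and the mechanism you propose for the crux step differs from what works. (i) The ``one solution per admissible pair $(D,\German{a})$, with one exception over the whole family'' claim is a substantial lemma, not a routine unit comparison: the paper's Lemma 1 needs a gap argument on the exponents $z$ (via divisibility properties of the $v_{2jt}$), an analysis isolating exactly two exceptional configurations (the family $\{A,B\}=\{3(\tfrac{3^{\nu-1}-1}{8}),\tfrac{3^{\nu+1}-1}{8}\}$ and the case $B-A=1$), a proof that each occurs for at most one ideal pair, and a proof that they are mutually exclusive — the last point ultimately invoking Ljunggren's theorem on $\tfrac{3^\nu-1}{2}=y^2$. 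Writing ``should show the two solutions coincide, save for a single exceptional configuration'' assumes precisely what must be established, and the additive $+1$ in the final bound lives or dies on this.

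(ii) The inverse-proportionality inequality is asserted, not proved, and the route you sketch does not obviously close. Your bound $\#\mathcal{D}\le 2^{n-k}$ from $k$ independent affine conditions $\psi_p(T)=1$ is fine, but the companion claim $F(D)\le 2^{k-1}$ is unsupported: the characters $\Legendre{\cdot}{p}$ restricted to the $2$-torsion of $(\intZ/c\intZ)^*$ do not separate the ideal factorizations of $c$ (e.g.\ when $\Legendre{-1}{p}=1$ the two primes above $p$ are not distinguished by quadratic-residue data), and in the degenerate case $k=0$ your inequality reads $F(D)\le 2^{-1}$, which cannot be repaired merely by ``handling small $k$ separately.'' The paper's actual mechanism is different: to each ideal $\German{c}$ it attaches a key number $L$ with $L^2\equiv -D\bmod c$, shows via the congruence $D_1X\conj{Y}\equiv JL\bmod c$ that two solutions in the same parity class have key numbers differing by a factor $\delta$ with $\delta^2\equiv 1\bmod c$ and $\delta$ lying in the multiplicative group $S$ generated by the $d_i$ modulo $c$, so that the factorizations occurring for one parity class are parametrized by $M/\{\pm1\}$ where $M=S\cap\{m:m^2\equiv1\}$; it then gets the exact identity $pq=\tfrac12\#(U/2U')\cdot\#(U'/U)=\tfrac12\#(U'/2U')=2^{n-1}$ from the lattice chain $2U'\subseteq U\subseteq U'\subseteq\intZ^n$ with $U'=\{\bfs:2\bfs\in U\}$. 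That algebraic identity, not a count of independent Legendre conditions, is what eliminates $\omega$. As it stands your argument has a genuine gap at both points.
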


The bound in Theorem 1, although not realistic for higher values of $n$, nevertheless improves both the elementary and non-elementary bounds mentioned above.  When $n<4$ the bound in Theorem 1 is precise: there are several cases with exactly $2^{n-1}+1$ solutions.  

To explain the key method which is new here, we briefly review the most common standard elementary approach to this problem: to simplify this explanation, in this paragraph and the next assume the $d_i$ in Theorem 1 are all prime; let $D$ be the least positive integer such that $\left( \frac{XY}{D} \right)^{1/2} \in \intZ$, square both sides of (1.1) and factor into ideals in the quadratic field $\ratQ(\sqrt{-D})$ to obtain $[X - Y + 2 \sqrt{-XY}] = \German{c}^{2z}$, $[X - Y - 2 \sqrt{-XY}] = \conj{ \German{c}}^{2z}$, where the ideal $\German{c}$ has norm $[c]$ and is not divisible by a principal ideal with a rational integer generator greater than one.  For each choice of $D$, there are $2^{\omega-1}$ possible pairs $\{  \German{c}, \conj{\German{c}} \}$.  For each such $\{  \German{c}, \conj{\German{c}} \}$ there is (with two exceptions) at most one solution to (1.1) (this is essentially an old result which we give as Lemma 1 in Section 2).  Roughly speaking, the standard elementary approaches obtain a bound on $N$ by multiplying the total number of possible $D$ by the total number of pairs $\{ \German{c}, \conj{\German{c}} \}$ which can occur for a given $D$.  This gives a bound of $2^{n + \omega -1}$, if one excludes from consideration the two exceptions mentioned above (see Lemma 1 in Section 2 for the two exceptions).  

In this paper we consider the congruence 
$$ X + Y \equiv 0 \bmod c \eqno{(1.2)} $$
noting that, just as in the treatment of (1.1) in the previous paragraph, each solution to (1.2) corresponds to a given $D$ and a pair of ideals $\{ \German{c}, \conj{\German{c}} \}$ in $\ratQ(\sqrt{-D})$.  Using a generalization of the methods of \cite{ScSt7}, we show that the larger the number of $D$ corresponding to solutions of (1.2) the smaller the number of pairs $\{ \German{c}, \conj{\German{c}} \}$ which can occur for a given $D$. More precisely, we obtain a bound $q$ on the number of pairs $\{ \German{c}, \conj{\German{c}} \}$ which can occur with a given $D$, and then, letting $p$ be the number of $D$ corresponding to solutions of (1.2), we show that $pq = 2^{n-1}$.   The bound $q$ is independent of the specific value of $D$.  

In \cite{ScSt7} this idea was used in the case $n=2$ to show that there are at most two solutions in positive integers $(x,y,z)$ to the equation $a^x + b^y = c^z$ where $a>1$, $b>1$, $ 2 \notdiv c$, improving the bound of $2^{\omega+1}$ in \cite{Le} and also improving the absolute bound of $2^{36}$ obtained by Hirata-Kohno \cite{H-K} using the non-elementary work of Beukers and Schlickewei cited above \cite{BS} (there are an infinite number of $(a,b,c)$ giving exactly two solutions).   

Our treatment in Theorem 1 is slightly more general than the usual treatment in that the $d_i$ are not necessarily prime, but this will not affect the theorem or its proof (see the parenthetical comment at the end of Section 2).  

From Theorem 1 we derive the following corollary which improves a result in \cite{CCS} in which the bound depends on $\omega$.

\begin{Corollary} 
Let $r$ and $s$ be positive integers, let $a$ and $b$ be integers greater than 1, and let $c$ be any odd positive integer prime to $ra$.  Then there are at most 4 solutions in positive integers $(x,y,z)$ to the equation 
$$ r a^x + s b^y = c^z \eqno{(1.3)}$$
except when (1.3) has a solution in which $\{ r a^x, s b^y \} = \{ 3 \left(\frac{3^{\nu-1} - 1}{8} \right),  \frac{3^{\nu+1} - 1}{8} \} $ with one of $a$, $b$ equal to 3 and $\nu>1$ an odd integer, in which case there is at most one further solution.  
\end{Corollary}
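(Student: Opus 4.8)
The plan is to deduce Corollary 1 from Theorem 1 by reducing (1.3) to an instance of (1.1) and then re-running the proof of Theorem 1 while exploiting that (1.3) has only the two variable bases $a$ and $b$.

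First I would record the elementary reduction. If $(x,y,z)$ solves (1.3), then, since $c \mid c^z = ra^x + sb^y$ and $\gcd(c,ra)=1$, no prime of $c$ divides $ra^x$, and subtracting $ra^x$ from $c^z$ shows no prime of $c$ divides $sb^y$ either; likewise $\gcd(ra^x,sb^y)$ divides $c^z$ but is prime to $c$, hence equals $1$. So $X := ra^x$ and $Y := sb^y$ are coprime, prime to $c$, and together involve exactly the primes of $rsab$; moreover a solution can exist only if $\gcd(r,s)=\gcd(r,b)=\gcd(a,s)=\gcd(a,b)=1$. Absorbing the largest power of $a$ dividing $r$ into the exponent $x$, and similarly $b$ into $y$, we may assume $\gcd(r,a)=\gcd(s,b)=1$ --- this can only discard solutions --- so $r,s,a,b$ become pairwise coprime. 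Now (1.3) is an instance of (1.1), and distinct solutions $(x,y,z)$ give distinct triples $(X,Y,z)$ because $a,b>1$; in particular $N \le 2^{n-1}+1$ by Theorem 1, where $n$ is the number of primes dividing $rsab$.

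The real content is to improve $2^{n-1}+1$ for this special instance. Let $D$ be the least positive integer with $(XY/D)^{1/2}\in\intZ$, that is, the squarefree part of $XY = rs\,a^{x}b^{y}$. Since $\gcd(ra,sb)=1$, $D$ is the product of the squarefree part of $ra^{x}$ and that of $sb^{y}$, and each of these depends only on the parity of $x$ (respectively $y$); hence $D$ takes at most $4$ distinct values over all solutions. Let $p\le 4$ be the number actually occurring. For each such $D$, squaring (1.1) and factoring into ideals of $\ratQ(\sqrt{-D})$, exactly as in the discussion preceding (1.2), attaches to every solution a pair $\{\German{c},\conj{\German{c}}\}$ of conjugate ideal factors of $c$; by Lemma 1, outside its two exceptional configurations a given pair supports at most one solution of (1.1). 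Running the inverse-proportionality argument of \cite{ScSt7} that underlies Theorem 1 --- but with the a priori list of possible $D$ now of length $4$ rather than of length $2^{n-1}$ --- bounds the number of pairs $\{\German{c},\conj{\German{c}}\}$ that can occur for a given $D$ by a quantity $q$, independent of $D$, with $pq\le 4$. Hence $N$ is at most $4$ plus whatever extra solutions the Lemma 1 exceptions force.

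Finally I would unwind the exceptions. One of the two exceptional configurations of Lemma 1 is incompatible with the shape $ra^{x}+sb^{y}=c^{z}$ --- a short parity-and-size check --- and contributes nothing, while the other, translated back through the substitution above, is precisely $\{ra^{x},sb^{y}\}=\{3(\tfrac{3^{\nu-1}-1}{8}),\ \tfrac{3^{\nu+1}-1}{8}\}$ with one of $a,b$ equal to $3$ and $\nu>1$ odd, in which case it permits one further solution; this yields $N\le 4$ in general and $N\le 5$ in that family. The step I expect to be the main obstacle is establishing $pq\le 4$: one must check that the counting argument of \cite{ScSt7}, generalized from the case $n=2$ to the present situation, really does bound the number of admissible ideal factorizations of $c$ per value of $D$ by $4/p$ uniformly in $D$ --- equivalently, that when several of the four possible values of $D$ actually occur, correspondingly few factorizations of $c$ are compatible with a solution for each. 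Pinning down the exceptional family exactly, and verifying that the other Lemma 1 exception indeed drops out, is the remaining delicate bookkeeping.
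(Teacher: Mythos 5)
Your overall strategy is the right one, and parts of it mirror the paper's proof: Lemma~1 gives at most one solution per ideal pair $\{\German{c},\conj{\German{c}}\}$ outside two exceptional configurations, Case~2 of Lemma~1 is excluded because its presence would force a companion solution with $\min(ra^x,sb^y)=1$ (impossible since $a,b>1$), and Case~1 survives as the stated exceptional family. But the central inequality $pq\le 4$ is exactly the step you leave unproven and flag as ``the main obstacle,'' and it is the entire content of the paper's Lemma~3. Your appeal to Theorem~1 with $n$ equal to the number of primes dividing $rsab$ does not help (it gives $2^{n-1}+1$ for a potentially large $n$), and your subsequent claim that the inverse-proportionality machinery, rerun with ``at most $4$ values of $D$,'' yields $q\le 4/p$ is not justified: the product formula $pq=2^{n-1}$ of Section~3 is proved for the equation $X+Y=c^z$ with $n$ \emph{bases} $d_i$ and no coefficients, and it does not transfer verbatim to $rX+sY=c^z$.

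The missing idea is this. The paper treats (1.3) directly as the two-base equation $rX+sY=c^z$ with $d_1=a$, $d_2=b$ (so $n=2$), and reruns the arguments of Sections~2 and~3. The one genuine change is that in the proof of Lemma~2 the inclusion $-1\bmod c\in M$ was deduced from the solvability of $A+B\equiv 0 \bmod c$ with $A/B$ a product of powers of the $d_i$; with the coefficients $r,s$ present, $A/B\equiv -1$ no longer shows $-1$ lies in the group generated by $a,b$ modulo $c$, so the pairing $m\leftrightarrow -m$ that let one take $q=\tfrac12\#M$ is lost and one must take $q=\#M=\#(U'/U)$. This doubles the product to $pq\le 2^n=4$, giving at most $4$ solutions plus the one extra from Case~1 of Lemma~1. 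Without this observation about how $r$ and $s$ enter the group-theoretic count (they never appear in your discussion of the ideal-factorization argument, e.g.\ in the analogue of congruence (2.13)), the bound of $4$ is asserted rather than proved, so the proposal has a genuine gap at its decisive step.
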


The following result, also an immediate consequence of Theorem 1, can be slightly improved using additional methods (although such improvements still lead to unrealistically large bounds when $w$ is large); but here we are concerned only with pointing out what Theorem 1 directly implies.

\begin{Corollary} 
Let $R$ be a finite set of primes with cardinality $w$ and let $W$ be the infinite set of all positive integers not divisible by any primes not in $R$.  Let $c$ be any positive odd integer none of whose prime divisors is in $R$.  Then there are at most $3^{w-1} + 2^{w-1}$ solutions $(A,B,z)$ to the equation 
$$ A + B = c^z, \eqno{(1.4)}$$
where $AB \in W$, $A < B$, and $z$ is a positive integer.  
\end{Corollary}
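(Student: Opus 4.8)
The plan is to deduce Corollary 2 from Theorem 1 by running through the possible prime supports of $AB$ and then consolidating the resulting estimates. First come the routine reductions. If a solution $(A,B,z)$ of (1.4) has $AB\in W$, then any prime dividing $\gcd(A,B)$ lies in $R$ and also divides $c^z$; since no prime of $c$ lies in $R$, this forces $\gcd(A,B)=1$. Hence each prime of $R$ divides at most one of $A$ and $B$, so we may attach to the solution the set $S\subseteq R$ of primes actually dividing $AB$, and $S$ is non-empty because $A<B$ forces $B\ge 2$. For a fixed non-empty $S$ with $|S|=m$, the solutions of (1.4) with support exactly $S$ are exactly the solutions counted by Theorem 1 when $\{d_1,\dots,d_m\}$ is taken to be the set of primes in $S$ — the conditions $\max(x_i,y_i)>0$ and $\min(x_i,y_i)=0$ encoding ``support exactly $S$'' and ``$\gcd(A,B)=1$'' — so there are at most $2^{m-1}+1$ of them.

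Summing $2^{|S|-1}+1$ over all non-empty $S\subseteq R$ gives only $\tfrac12\bigl(3^{w}+2^{w+1}-3\bigr)$, which already exceeds $3^{w-1}+2^{w-1}$ once $w\ge 2$; so the real content is to show that the contributions of the different supports cannot all be simultaneously extremal. I would obtain this by re-running the argument behind Theorem 1 (which in turn generalizes \cite{ScSt7}) without fixing the support: to each solution attach, besides $S$, the integer $D$ defined (as in Theorem 1) as the least positive integer with $(AB/D)^{1/2}\in\intZ$, which here is a squarefree divisor of $\prod_{p\in R}p$; then square (1.4), factor into ideals of $\ratQ(\sqrt{-D})$, and use Lemma 1 so that each admissible pair $\{\German{c},\conj{\German{c}}\}$ of norm $[c]$ accounts for at most one solution apart from a restricted family of exceptions. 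The inverse-proportionality exploited for Theorem 1 — now measured against the full set $R$ in place of a single support — should bound the number of non-exceptional solutions, taken over all admissible $D$ at once, by $3^{w-1}$, while the exceptional solutions (of the special shape that appears in Corollary 1, which pins one of the relevant bases equal to $3$) are confined to a family of size at most $2^{w-1}$; adding the two estimates yields $N\le 3^{w-1}+2^{w-1}$.

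The main obstacle is precisely this consolidation. One must produce a bound on the number of admissible ideal pairs occurring for a given $D$ that is both independent of the number of prime divisors of $c$ and sharp enough that, summed over the $2^{w}$ candidate values of $D$, it telescopes to $3^{w-1}$ rather than to the naive $\tfrac12(3^{w}-1)$. This requires keeping track of how the primes of $R$ dividing $AB$ to an even power — those absent from $D$ — enter the ideal factorization, and checking that no admissible pair is counted twice across two different supports; verifying that the exceptional solutions total at most $2^{w-1}$ is a smaller but still necessary point.
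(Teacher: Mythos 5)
There is a genuine gap. You correctly reduce to supports $S\subseteq R$ and correctly observe that the naive sum $\sum_{\emptyset\neq S\subseteq R}\bigl(2^{|S|-1}+1\bigr)=\tfrac12\bigl(3^{w}+2^{w+1}-3\bigr)$ overshoots the target, but the way the paper closes this gap is not the elaborate ``global consolidation'' you sketch; it is a one-line parity observation you have missed. Since $c$ is odd, $c^{z}=A+B$ is odd, so exactly one of $A$, $B$ is even; hence $2\mid AB$ and every admissible support $S$ must contain the prime $2$ (in particular there are no solutions at all unless $2\in R$). The number of admissible supports of cardinality $k$ is therefore at most $\binom{w-1}{k-1}$, and Theorem 1 applied to each gives
$$N_{0}\le\sum_{k=1}^{w}\binom{w-1}{k-1}\left(2^{k-1}+1\right)=\sum_{g=0}^{w-1}\binom{w-1}{g}2^{g}+\sum_{g=0}^{w-1}\binom{w-1}{g}=3^{w-1}+2^{w-1},$$
which is exactly the claimed bound. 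No interaction between different supports, and no sharpening of the per-support bound, is needed.

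Your proposed second paragraph is not a proof as it stands: the assertion that the non-exceptional solutions, summed over all $D$ at once, ``should'' total at most $3^{w-1}$ is precisely the statement to be proved, and the inverse-proportionality identity $pq=2^{n-1}$ of Section 3 is tied to a fixed base set $d_{1},\dots,d_{n}$, so it does not directly globalize over varying supports in the way you suggest. Your instinct that the $2^{w-1}$ term accounts for the exceptional (Case 1/Case 2) solutions is consistent with how the bound decomposes (it is the sum of the ``$+1$'' terms, one per admissible support), but the $3^{w-1}$ arises simply as $\sum_{g}\binom{w-1}{g}2^{g}$ over supports containing $2$, not from any refined ideal-counting. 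The missing idea is the restriction to supports containing $2$; once you have it, the corollary is immediate from Theorem 1 and the binomial theorem.
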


When $n=2$ we can improve Theorem 1:  

\begin{Theorem} 
In the notation of Theorem 1, if $n=2$ then $N \le 2$, except for the following choices of $(d_1, d_2,c)$, taking $d_1 > d_2$:  
$(3,2,5)$, $(5,2,3)$, $(2^{g-1} - 1, 2, 2^{g} - 1)$, $g>2$.  Each of these cases gives exactly three solutions as follows:
\begin{align*}
(d_1, d_2, c) &= (3,2,5): \\
& 3+2 = 5, 3 \cdot 2^3 + 1 = 5^2, 3^2 + 2^4 = 5^2.\\
(d_1, d_2, c) &= (5,2,3):\\
&  5 + 2^2 = 3^2, 5 \cdot 2^4 + 1 = 3^4, 5^2 + 2 = 3^3.\\
(d_1, d_2, c) &= (2^{g-1} - 1, 2, 2^{g} - 1):\\
&  (2^{g-1} -1) + 2^{g-1} = 2^g-1, \\
&  2^{g+1}(2^{g-1} - 1) +1 = (2^g-1)^2, \\
&  2(2^{g-1}-1) +1 = 2^g-1. 
\end{align*}
\end{Theorem}

Sections 2 and 3 will give a proof of Theorem 1.  The proofs of the Corollaries will be given in Section 4.   In Section 5 we will improve the bound on $N$ for $n \le 2$; in that section we will prove Theorem 2.

\section{Two Lemmas}  

For Lemmas 1 and 2 below, we need some definitions and some notation.  

Let $c$ be any positive odd integer, and let $D$ be a positive squarefree integer prime to $c$ such that $\Legendre{-D}{p} = 1$ for every prime $p$ dividing $c$.  Write $[\alpha]$ to represent the ideal generated by the integer $\alpha$ in the ring of integers of $\ratQ(\sqrt{-D})$.  

Let $C_D$  be the set of all pairs $\{ \German{c}, \conj{\German{c}} \}$ such that $\German{c}$, $\conj{\German{c}}$ are ideals in the ring of integers of $\ratQ(\sqrt{-D})$ such that $\German{c} \conj{\German{c}} = [c]$ and $\German{c}$ is not divisible by a principal ideal with a rational integer generator greater than one.  

For every such $\German{c}$ there is a unique integer $L$ such that $-(c-1)/2 \le L \le (c-1)/2$ and $\German{c} \mid [L+\sqrt{-D}]$.  To see this, note that there are $2^\omega$ possible choices of $\German{c}$, and also $2^\omega$ integers $L_i$ such that $L_i^2 \equiv -D \bmod c$, $-(c-1)/2 \le L_i \le (c-1)/2$, $ 1 \le i \le \omega$.  Each $[L_i + \sqrt{-D}]$ is divisible by exactly one $\German{c}$ (let $\German{c} = \German{p}_1^{\alpha_1} \dots \German{p}_\omega^{\alpha_\omega}$ and let $\German{c}_0$ be any other ideal appearing in one of the pairs in $C_D$; suppose $[L_i + \sqrt{-D}]$ is divisible by both $\German{c}$ and $\German{c}_0$; $\German{c}_0$ is divisible by the conjugate of $\German{p}_j$ for some $j$ such that $1 \le j \le \omega$, so that $[L_i + \sqrt{-D}]$ is divisible by $[p_j]$, a contradiction).  And no $\German{c}$ divides more than one $[L_i + \sqrt{-D}]$ (if $\German{c}$ divides both $[L_i + \sqrt{-D}]$ and $[L_h + \sqrt{-D}]$ then $L_i - L_h \in \German{c}$ so that $\German{c} \mid [L_i - L_h]$, which is impossible since $|L_i -L_h| < c$).  For Lemma 2 below we will use the following:  

\begin{Observation} 
If $\German{c} \mid [a + b \sqrt{-D}]$, then $a \equiv b L \bmod c$. 
\end{Observation}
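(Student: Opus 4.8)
The plan is to turn the ideal divisibility $\German{c}\mid[a+b\sqrt{-D}]$ into the stated rational congruence by combining it with the defining property of the key number. Recall that for a principal ideal $[\alpha]$ the relation $\German{c}\mid[\alpha]$ means precisely $\alpha\in\German{c}$. So from $\German{c}\mid[L+\sqrt{-D}]$ (the definition of $L$) and the hypothesis $\German{c}\mid[a+b\sqrt{-D}]$ we get $L+\sqrt{-D}\in\German{c}$ and $a+b\sqrt{-D}\in\German{c}$, hence
$$ b(L+\sqrt{-D}) - (a+b\sqrt{-D}) = bL - a \in \German{c}, $$
that is, $\German{c}\mid[a-bL]$.

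The remaining, and only substantive, step is to deduce $c\mid a-bL$ from the fact that $\German{c}$ divides the ideal generated by the \emph{rational} integer $m=a-bL$. Since $m$ is real it equals its conjugate, so $m\in\German{c}$ forces $m\in\conj{\German{c}}$ as well; writing $[m]=\German{c}\German{d}$ for some ideal $\German{d}$ and taking conjugates gives $[m]=\conj{\German{c}}\,\conj{\German{d}}$. Once $\German{c}$ and $\conj{\German{c}}$ are known to be coprime, unique factorization of ideals yields $\conj{\German{c}}\mid\German{d}$, whence $[c]=\German{c}\conj{\German{c}}\mid\German{c}\German{d}=[m]$, i.e. $c\mid m$, which is exactly the assertion $a\congruent bL\bmod c$.

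Thus the heart of the matter is the coprimality of $\German{c}$ and $\conj{\German{c}}$, and this is where the hypotheses on $\German{c}$ and on $D$ enter. Suppose a prime ideal $\German{p}$ divided both; then $\conj{\German{p}}$ would also divide $\German{c}$. If $\German{p}\neq\conj{\German{p}}$, then $\German{p}\conj{\German{p}}=[p]$, with $p$ the rational prime below $\German{p}$, would divide $\German{c}$, contradicting the hypothesis that $\German{c}$ is not divisible by a principal ideal with a rational integer generator greater than one. If $\German{p}=\conj{\German{p}}$, then either $\German{p}=[p]$ with $p$ inert, again giving the forbidden divisibility $[p]\mid\German{c}$, or $\German{p}$ lies over an odd prime $p$ ramified in $\ratQ(\sqrt{-D})$; but $\German{p}\mid\German{c}\mid[c]$ forces $p\mid c$, while a ramified odd prime divides $D$, contradicting $\gcd(c,D)=1$. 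Hence no such $\German{p}$ exists. I do not anticipate any real difficulty: the argument is a short exercise in ideal arithmetic, the only point needing care being this coprimality claim (the degenerate case $a=bL$ being vacuous, as $c\mid 0$).
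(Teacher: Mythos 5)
Your proof is correct; the paper states this Observation without proof, and your argument (pass to the rational integer $a-bL\in\German{c}$, then conclude $[c]=\German{c}\conj{\German{c}}\mid[a-bL]$ from the coprimality of $\German{c}$ and $\conj{\German{c}}$, which holds because $\German{c}$ admits no rational-integer ideal divisor and no ramified prime divides $[c]$) is exactly the standard reasoning that the definition of the key number is designed to support. The one case you leave implicit is a ramified prime over $2$, which can occur in $\ratQ(\sqrt{-D})$ even though $2\nmid D$; it is excluded by the same token as your odd ramified case, since $\German{p}\mid[c]$ forces $p\mid c$ and $c$ is odd.
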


Now let $(A,B)$ be an ordered pair of coprime positive integers such that  
$$ A + B \equiv 0 \bmod c,   \eqno{(2.1)}$$
with $D$ the least positive integer such that $\left(\frac{AB}{D}\right)^{1/2} \in \intZ$.  
We define the integer $\gamma(A,B)$ in $\ratQ(\sqrt{-D})$:
$$ \gamma(A,B) = A - B + 2 \sqrt{-AB }. $$
$\gamma(A,B) = (\sqrt{A} + \sqrt{-B})^2$.  (Note that $\gamma(B,A)= (\sqrt{-A} + \sqrt{B})^2 = B - A + 2\sqrt{-AB} = -\conj{\gamma}(A,B)$.) $[\gamma(A,B)] $ must be divisible by one of the ideals $\German{c}$ or $\conj{\German{c}}$ in exactly one of the pairs $\{ \German{c}, \conj{\German{c}} \} \in C_D$.  For any pair of coprime positive integers $(A,B)$ satisfying (2.1) with $D$ the smallest positive integer such that $\left(\frac{AB}{D}\right)^{1/2} \in \intZ$, we say that $(A,B)$ is {\it associated} with $\{ \German{c}, \conj{\German{c}} \} \in C_D$ if $[\gamma(A,B)]$ is divisible by $\German{c}$ or $\conj{\German{c}}$.  

The following notation is used for both lemmas:  $R$ is a given finite set of primes coprime with the given positive odd integer $c$;  $D_1$, $D_2$, \dots, $D_w$ are the positive squarefree divisors, including 1, of the product of all the primes in $R$; and $K = C_{D_1} \cup C_{D_2} \dots \cup C_{D_w}$, where $C_{D_j}$ is defined as above with $D=D_j$, $1 \le j \le w$ (note that $C_{D_j}$ is empty if $\Legendre{-D_j}{p} = -1$ for some $p$ dividing $c$).  

In Lemma 1, the set $T$ consists of the positive integers divisible by every prime in $R$ and by no other primes.  For Equation (2.2) below in Lemma 1, we say that a solution $(A,B,z)$ to (2.2) is associated with the pair $\{ \German{c}, \conj{\German{c}} \} \in K$ if the pair $(A,B)$ is associated with $\{ \German{c}, \conj{\German{c}} \}$.  Lemma 1 is essentially an old result:  see \cite{C}, \cite{Le}, \cite{Sc} for earlier versions.

\begin{lemma}    
Let $\{ \German{c}, \conj{\German{c}} \} \in K$.  Then  the equation 
$$ A + B = c^z,  \eqno{(2.2)} $$
in positive integers $A$, $B$, $z$ with $AB \in T$ and $A<B$, 
has at most one solution $(A,B,z)$ associated with $\{ \German{c}, \conj{\German{c}} \} $, except in the following two mutually exclusive cases:  

Case 1:  (2.2) has a solution associated with $\{ \German{c}, \conj{\German{c}} \}$ such that $(A, B)$ equals $\left( 3 \left(\frac{3^{\nu-1} - 1}{8} \right),  \frac{3^{\nu+1} - 1}{8} \right)$, where $\nu>1$ is an odd integer.      

Case 2:  (2.2) has a solution associated with $\{ \German{c}, \conj{\German{c}} \}$ with $B - A = 1$.

In both cases, $\{ \German{c}, \conj{\German{c}} \}$ has exactly two solutions of (2.2) associated with it, and it is the only pair in $K$ with this property.  Letting these two solutions be $(A,B,z)$ and $(A', B',z')$, for Case 1 we have $(A', B', z') = (B, 3^{2\nu} A, 3z) = (\frac{3^{\nu+1} - 1}{8}, 3^{2\nu+1} \left(\frac{3^{\nu-1} - 1}{8} \right), 3z)$,  and for Case 2 we have $(A', B',z') = (1, 4AB, 2z)= (1, 4A(A+1), 2z)$.    
\end{lemma}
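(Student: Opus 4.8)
The plan is to work inside the ring of integers of $\ratQ(\sqrt{-D})$ and translate a solution $(A,B,z)$ of (2.2) associated with $\{\German{c},\conj{\German{c}}\}$ into a statement about $\gamma(A,B) = A-B+2\sqrt{-AB} = (\sqrt{A}+\sqrt{-B})^2$. Squaring (2.2) gives $(A-B)^2 + 4AB = c^{2z}$, hence $\gamma(A,B)\conj{\gamma}(A,B) = c^{2z}$, so $[\gamma(A,B)] = \German{c}^{2z}\conj{\German{c}}^{0}$ or $\German{c}^{j}\conj{\German{c}}^{2z-j}$ for some $j$; since $\German{c}$ is the designated ideal dividing $[\gamma(A,B)]$ and $\gcd(\gamma,\conj{\gamma})$ is supported only at primes dividing $2D$ (which are coprime to $c$), in fact $[\gamma(A,B)] = \German{c}^{2z}$ up to the conjugate. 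So if $(A,B,z)$ and $(A',B',z')$ are two solutions associated with the same pair, with $z'\ge z$, then $[\gamma(A',B')] = \German{c}^{2z'}$ and $[\gamma(A,B)] = \German{c}^{2z}$, whence $[\gamma(A',B')] = [\gamma(A,B)]^{z'/z}$ forces $z\mid z'$ (consider $\German{c}$-adic valuations, using that $\German{c}$ is not divisible by a rational-integer principal ideal so $\German{c}$ is "primitive"), and then $\gamma(A',B') = \varepsilon\,\gamma(A,B)^{z'/z}$ for a unit $\varepsilon$; moreover $z'/z$ must be odd, since $\gamma$ is (twice the imaginary part) positive in the chosen embedding and an even power would put the ideal $\German{c}^{2z'}$ equal to $(\German{c}^{2z})^{k}$ with the associated $L$-number behaving multiplicatively — I would phrase this via the key number $L$ and the Observation, noting $L(\German{c}^{m})$ is determined by $L$ and that the designated-divisor condition is preserved only for odd exponents.

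The main work is then the unit and sign analysis. In an imaginary quadratic field the only units are $\pm1$ (and $\pm i, \pm\rho$ for $D=1,3$), so generically $\gamma(A',B') = \pm\gamma(A,B)^{m}$ with $m=z'/z$ odd and $m\ge 3$ (if $m=1$ the two solutions coincide). Writing $\gamma(A,B) = (\sqrt{A}+\sqrt{-B})^{2}$ and $\gamma(A',B') = (\sqrt{A'}+\sqrt{-B'})^{2}$, taking square roots gives $\sqrt{A'}+\sqrt{-B'} = \pm i^{?}(\sqrt{A}+\sqrt{-B})^{m}$; expanding by the binomial theorem and separating real and imaginary parts yields that $\sqrt{A'}$ and $\sqrt{B'}$ are explicit integer-coefficient polynomials in $\sqrt{A},\sqrt{B}$ (the Chebyshev-like expressions $\sum \binom{m}{2k}A^{(m-2k)/2}(-B)^{k}$ etc.). The constraints $A'B'\in T$ (same prime support as $AB$), $\gcd(A',B')=1$, $A'+B'=c^{z'}$, and $A'<B'$ must all hold. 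I would show these force $m=3$ and then solve the resulting system: the real part condition $A+B = c^{z}$ together with $A'+B' = c^{3z}$ and the binomial identity $(\sqrt{A}+\sqrt{-B})^{3} = \sqrt{A}(A-3B) + \sqrt{-B}(3A-B)$ gives $A' = A(A-3B)^{2}$, $B' = B(3A-B)^{2}$ (or the swap, depending on sign), and then $A'+B' = (A+B)^{3}$ automatically, while $A'<B'$ and $\gcd(A',B')=1$ and the prime-support condition pin things down. Chasing $|A-3B|=1$ or $|3A-B|=1$ (needed so no new prime enters) against $A+B=c^{z}$ produces exactly the two listed families: $B-A=1$ giving $(1,4AB,2z)$ after one more squaring step when the cube degenerates, and the $3$-power family $\{3(3^{\nu-1}-1)/8,\ (3^{\nu+1}-1)/8\}$ with $(A',B',z')=(B,3^{2\nu}A,3z)$.

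The delicate points, and where I expect the real obstacle, are: (i) handling the extra units when $D=1$ or $D=3$, which is exactly why the $D=3$ case produces the special family in Case 1 — here one must allow $\varepsilon=\rho$ a sixth root of unity and redo the square-root extraction, and the arithmetic that forces $a=3$ (one of $d_i$ equal to $3$) and $\nu$ odd comes out of this; (ii) proving $z\mid z'$ rigorously rather than just the ideal equality up to conjugation — one needs that the pair $\{\German{c},\conj{\German{c}}\}$ being in $C_D$ (not divisible by a rational principal ideal) means $\German{c}$ and $\conj{\German{c}}$ share no common ideal factor, so $\German{c}$-adic valuation is well-defined and additive; (iii) verifying that $m$ must be odd — the even case would correspond to $\gamma(A',B')$ being (up to unit) a square of something with the wrong key number, contradicting that $L$ is the designated residue with $-(c-1)/2\le L\le (c-1)/2$ and the Observation $A'\equiv B'L \bmod c$. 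Once $m=3$ is isolated, the rest is the finite Diophantine bookkeeping sketched above, and the two "mutually exclusive" cases and the uniqueness of the special pair in $K$ follow because a solution can satisfy $B-A=1$ or lie in the $3$-power family but not both (the first forces $4\mid AB$-type parity obstructions against the second's $3$-power shape), and any pair admitting a second associated solution must be one of these by the argument just given.
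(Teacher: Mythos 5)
Your overall strategy --- factoring $\gamma(A,B)=(\sqrt A+\sqrt{-B})^2$ in $\ratQ(\sqrt{-D})$ and comparing two solutions via powers --- is the right starting point, but there are two structural errors and one large unproved step. First, you cannot assume that a second solution satisfies $\gamma(A',B')=\varepsilon\,\gamma(A,B)^{z'/z}$ with $z\mid z'$: the ideal equalities $[\gamma]=\German{c}^{2z}$ and $[\gamma']=\German{c}^{2z'}$ only tell you that both $z$ and $z'$ are multiples of the least $j$ for which $\German{c}^{2j}$ is principal with a generator $u_{2j}+v_{2j}\sqrt{-D}$ of the appropriate form; a priori one could have, say, $z=2j$ and $z'=3j$, and then neither $\gamma$ nor $\gamma'$ is a power of the other. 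The paper avoids this by anchoring everything to the base element $u_{2j}+v_{2j}\sqrt{-D}$, and it needs Lemmas 1--3 of \cite{Sc} to show that $j\mid z$ for every associated solution and that a solution actually occurs at $z=j$. Second, your claim that the exponent $m=z'/z$ must be odd is false, and fatally so: Case 2 of the lemma is exactly the even case, with $(A',B',z')=(1,4AB,2z)$, i.e.\ $m=2$. If you carry out the even-exponent computation you find $\{A',B'\}=\{(A-B)^2,\,4AB\}$, and the coprimality and prime-support conditions force $(A-B)^2=1$, which \emph{is} Case 2; the key-number argument you propose for excluding even $m$ would erase this case rather than recover it ``after one more squaring step.''

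Beyond these, the step ``I would show these force $m=3$'' is the technical heart of the lemma and is left entirely unproved. For odd $m=p>3$ one must show that the coefficients $\sum_k\binom{m}{2k}A^{(m-1)/2-k}(-B)^{k}$ and its companion cannot have the required prime support; the paper does this by playing a growth inequality ($4c^{j(t-2)}<k^2+4/9$) against the exact valuation $|v_{2jp}/v_{2j}|=p$ supplied by Lemma 3 of \cite{Sc}, and a separate argument is still required to exclude a \emph{third} solution at $m=9,15,\dots$ after a Case 1 solution at $m=3$ --- your sketch does not address third solutions at all. Finally, the assertions that the exceptional pair is unique in $K$ (note that $K$ is a union over all squarefree $D$, so this is a statement across different quadratic fields) and that Cases 1 and 2 are mutually exclusive are not the easy parity observations you suggest: the paper needs Lemmas 3.1 and 3.2 of \cite{ScSt6} for the former, and Ljunggren's theorem on $(3^\nu-1)/2=y^2$ together with an explicit computation modulo the factorization of $11^3-1$ for the latter.
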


\begin{proof}
Let $(A, B, z)$ be  a solution to (2.2) so that $AB \in T$, $A<B$, and $(A,B)=1$.  Choose $D$ to be the least integer such that $\left(\frac{AB}{D}\right)^{1/2} \in \intZ$, with $(A,B,z)$ associated with $\{ \German{c}, \conj{\German{c}} \} \in C_D$.  Write 
$$\gamma = \gamma(A,B,z)= A - B + 2 \sqrt{-AB}.  \eqno{(2.3)}$$
Let $j$ be the least integer such that $\German{c}^{2j} = [u_{2j} + v_{2j} \sqrt{-D}]$ for some integer $u_{2j}$ and some positive integer $v_{2j}$ such that $v_{2j}^2 D$ is divisible by every prime in $R$, so that $2 \mid v_{2j}^2 D$.  Note that $u_{2j}$ and $v_{2j}$ are unique, even when $D=1$ or 3.  Write $u_{2jt} + v_{2jt} \sqrt{-D} = (u_{2j} + v_{2j} \sqrt{-D})^t$ for all $t \ge 1$.  Since $\gamma \conj{\gamma} = c^{2z}$, by Lemma 2 of \cite{Sc} $j \mid z$ for every solution to (2.2) associated with $\{ \German{c}, \conj{ \German{c}} \}$, so that for every solution $(A,B,z)$ to (2.2) associated with $\{ \German{c}, \conj{\German{c}} \}$ there exists a $t \ge 1$ such that 
$$ \pm ( A - B \pm 2 \sqrt{-AB} ) = u_{2jt} + v_{2jt} \sqrt{-D}.  \eqno{(2.4)} $$
Conversely, for every $t \ge 1$ such that $v_{2jt} D \in T$, (2.2) has a solution $( A, B, z) = (A_{jt}, B_{jt}, jt)$ where 
$$ A_{jt} = \frac{c^{jt} - |u_{2jt}|}{2}, B_{jt} = \frac{c^{jt} + |u_{2jt}|}{2}  \eqno{(2.5)} $$
and 
$$A_{jt} + B_{jt} = c^{jt}, \  A_{jt} - B_{jt}  =  -|u_{2jt}|, \  A_{jt} B_{jt} = \frac{v_{2jt}^2 D}{4},  \eqno{(2.6)} $$ 
noting that $A_{jt} B_{jt} \in T$ since $8 \mid v_{2jt}^2 D = c^{2jt} - u_{2jt}^2$.  From (2.6) we see that $(A_{jt}, B_{jt})$ is the only possible choice of $(A,B)$ satisfying (2.4).  
 
By Lemma 1 of \cite{Sc} we must have $v_{2j}^2 D \in T$, so that (2.2) has a solution $(A_j, B_j, j)$ with 
$$ A_{j} = \frac{c^{j} - |u_{2j}|}{2}, B_{j} = \frac{c^{j} + |u_{2j}|}{2},  \eqno{(2.7)} $$ 
so that (2.6) holds with $t=1$.   

Suppose another solution associated with $\{ \German{c}, \conj{\German{c}} \}$ exists, and suppose further that this solution has $z = jt$ for some odd $t>1$.  By Lemma 1 of \cite{Sc} $v_{2j} \mid v_{2jt}$.  Let 
$$\frac{v_{2jt}}{v_{2j}} = k,  \eqno{(2.8)}$$
so that, by the last equation in (2.6), $A_{jt} B_{jt} = k^2 A_j B_j$.  So 
$$ c^{jt} = A_{jt} + B_{jt} \le A_{jt} B_{jt} + 1 = k^2 A_j B_j + 1 < k^2  \left(\frac{A_j + B_j}{2} \right)^2 + 1 =  \frac{k^2 c^{2j}}{4} + 1, $$
so that 
$$ 4 c^{j (t-2)} < k^2 +\frac{4}{9} \eqno{(2.9)} $$
where the term $4/9$ follows from $c^{2j} \ge 9$.  By (2.9) $k^2>1$, so there exists some prime $p \in R$ such that $p \mid \frac{v_{2jt}}{v_{2j}}$.  By Lemma 3 of \cite{Sc}, $p \mid t$, hence $p$ must be odd.  $v_{2j} D \in T$ and $v_{2jt} D \in T$, so $v_{2jp} D \in T$, so that (2.2) has a solution $(A_{jp}, B_{jp}, jp)$ such that (2.6) holds with $t=p$.  Assume first that either $p>3$ or $9 \mid A_j B_j$.  By Lemma 3 of \cite{Sc}, 
$$ \vert \frac{v_{2jp}}{v_{2j}} \vert = p, \eqno{(2.10)} $$
and we have (2.8) and (2.9) with $|k| = t = p$, which is impossible.  

So we must have $p = 3$ with $9 \notdiv A_j B_j$.  We argue as above with $p=3$ but with (2.10) replaced by 
$$ \vert \frac{v_{6j}}{v_{2j}} \vert = 3^\nu $$
for some positive integer $\nu$.  Considering the expansion of $(u_{2j}+v_{2j} \sqrt{-D})^3$, we have $\pm 3^\nu = \frac{ v_{6j} }{v_{2j}} = 3 u_{2j}^2 - v_{2j}^2 D$.  But also $u_{2j}^2 + v_{2j}^2 D = c^{2j}$, so that $4 u_{2j}^2 - c^{2j} = \pm 3^\nu$.  Consideration modulo 8 shows that we must have the upper sign on the right side of this equation with $\nu$ odd.  Taking the difference of squares of the left side we find
$$ c^{j} = \frac{ 3^\nu - 1}{2}, |u_{2j}| = \frac{3^\nu +1}{4}. $$
Substituting for $c^j$ and $|u_{2j}|$ in (2.7), we derive a Case 1 solution.  

Suppose there is a third solution with odd $t>3$.  Note that $ 3 \nmid v_{2j}$ (we are assuming $9 \nmid A_j B_j$).  We apply Lemma 3 of \cite{Sc}: $i=3$ is the least value of $i$ such that $3 \mid v_{2ji}$ (with $3^\nu \parallel v_{6j}$); and $i=9$ is the least value of $i$ such that $3^{\nu+1} \mid v_{2ji}$ (with $3^{\nu+1} \parallel v_{18j}$).  We now use these facts to consider the possibility of a third solution with odd $t>3$.  

We have already shown that $\vert \frac{v_{2jt}}{v_{2j}} \vert$ is greater than 1 and divisible by no primes other than 3. So $3 \mid v_{2jt}$, so that, by the previous paragraph along with Lemma 2 of \cite{Sc}, $6j \mid 2jt$.  If $3 \parallel t$, then, since $3^\nu \parallel v_{6j}$, also (by Lemma 3 of \cite{Sc}) $3^\nu \parallel v_{2jt}$.  Since $\frac{v_{2jt}}{v_{6j}}$ can be divisible by no primes other than 3, we must have, for odd $t>3$ such that $3 \parallel t$,  
$$ \vert  \frac{v_{2jt}}{v_{6j}} \vert = 1, $$
which we can treat in the same way as (2.8) to obtain a contradiction as in (2.9).  So we must have $ 9 \mid t$.  Therefore we have $v_{6j} D \mid v_{18j} D \mid v_{2jt} D \in T$, which requires $v_{18j} D \in T$, so that $(A, B, z) = (A_{9j}, B_{9j}, z_{9j})$ is a solution to (2.2).  From the previous paragraph we obtain 
$$ \vert \frac{v_{18j} } {v_{6j}} \vert = 3 $$
and using the same argument used above (in which we derived (2.9) from (2.8)) we obtain 
$$ 4 c^{9j-6j} < 3^2 + \frac{4}{9}$$
which is impossible, showing that there can be no third solution with $t$ odd.
 
Now suppose a solution associated with $\{ \German{c}, \conj{\German{c}} \}$ exists with $z=jt$ for some even $t$.  Then $v_{4j} D \in T$, which requires $u_{2j} = \pm 1$, so that, by the second equation in (2.6) with $t=1$, we have a Case 2 solution $(A,B,z) = ( A_j, B_j, j)$ with a second solution $(1, 4 A_j B_j, 2j)$.  Given what we have already shown for the case $t$ odd, we find that the existence of a further solution associated with $\{ \German{c}, \conj{\German{c}} \}$ such that $2j \mid z > 2j$ would require $u_{4j} = \pm 1$, which is impossible since $| u_{4j}| = |u_{2j}^2 - v_{2j}^2 D| = v_{2j}^2 D - 1 > 1$.  

So we have shown that there is at most one solution with odd $t>1$ which is associated with $\{ \German{c}, \conj{\German{c}} \}$, and, if such a solution exists, $(A_j, B_j, j)$ is Case 1; and we have shown there is at most one solution with even $t$ which is associated with $\{ \German{c}, \conj{\German{c}} \}$, and, if such a solution exists, $(A_j, B_j, j)$ is Case 2.   

It remains to show there is at most one $\{ \German{c}, \conj{\German{c}} \} \in K$  which has a Case 1 (respectively Case 2) solution associated with it, and that Cases 1 and 2 are mutually exclusive (that is, if a given pair $\{ \German{c}, \conj{\German{c}} \}$ in $K$ has a Case 1 solution associated with it, no pair in $K$ has a Case 2 solution associated with it).  For Case 1 we have $3^{\nu} - 1 = 2c^j $, so that, by Lemmas 3.1 and 3.2 of \cite{ScSt6}, $c$ determines $\nu$ which determines $A$ and $B$, so that a unique $\{ \German{c}, \conj{\German{c}} \} \in K$ is determined.  
And if $(A,B,z)$ is a Case 2 solution associated with a given $\{ \German{c}, \conj{\German{c}} \} \in K$, then $1+ 4 A (A+1) = c^{2z}$ is a solution to (2.2) associated with the same $\{ \German{c}, \conj{\German{c}} \}$; by Lemmas 3.1 and 3.2 of \cite{ScSt6}, $c$ and $R$ determine $A$, and a unique $\{ \German{c}, \conj{\German{c}} \} \in K$ is determined. 

So it remains to show that Cases 1 and 2 are mutually exclusive.  Suppose (2.2) has a Case 1 solution $(A_1, B_1, z_1)$ and a Case 2 solution $(A_2, B_2, z_2)$.  Take $A_1 = 3 \left( \frac{3^{\nu-1}-1}{8} \right)$, $B_1 = \frac{3^{\nu+1}-1}{8}$ where $\nu$ is an odd positive integer.  Note that $c^{z_1} = \frac{3^\nu-1}{2}$ so that $A_1 = \frac{c^{z_1} - 1}{4}$, $B_1 = \frac{3 c^{z_1} + 1}{4}$.  We first treat the case $z_1$ odd.  Take $A_2 = \frac{c^{z_2} - 1}{2}$, $B_2 = \frac{ c^{z_2} + 1}{2}$.  Note that $\frac{c+1}{2} \mid A_2 B_2$.  Suppose $\frac{c+1}{2}$ is a power of 2.  Then $c \congruent 3 \bmod 4$ and, since $z_1$ is odd, $c^{z_1} \congruent 3 \bmod 4$, contradicting $c^{z_1} = \frac{3^\nu - 1}{2} \congruent 1 \bmod 4$.  So there exists an odd prime $p$ such that $p \mid \frac{c+1}{2}$, implying $p \mid A_2 B_2$.  Since the set of primes dividing $A_1 B_1$ is the same as the set of primes dividing $A_2 B_2$, we will obtain a contradiction for the case $z_1$ odd by showing $p \notdiv A_1 B_1$: $p \mid {c+1} \mid c^{z_1} + 1 $ hence $p \notdiv A_1 = \frac{(c^{z_1} + 1) -2}{4}$ and $p \nmid B_1 = \frac{ 3 (c^{z_1} + 1) - 2}{4}$.  So we must have $2 \mid z_1$.  Since $\nu > 1$ is odd, it follows from an old result of Ljunggren \cite{Lj} that the equation $\frac{3^\nu - 1}{2} = y^2$ has as its only solution $(y,\nu) = (11,5)$.  So we must have $c^{z_1} = 11^2$, $A_1 = 30$, $B_1 = 91$.  If a Case 2 solution exists we must have, for some positive integer $s$, $11^s = A_2 + B_2$ where $| A_2 - B_2 | = 1$, so that $11^{2s} = 1 + 4 A_2 B_2$, where the set of primes dividing $4 A_2 B_2$ is $\{2, 3, 5, 7, 13\}$.  Noting that $11^3-1 = 2\cdot 5 \cdot 7 \cdot 19$, we see that $7 \mid 11^{2s} - 1$ only if $19 \mid 11^{2s} -1$, giving a contradiction in the case $z_1$ even.  
\end{proof}

For Lemma 2 below we will need a few further definitions.  

As above, let $c>1$ be an odd integer and let $d_1$, $d_2$, \dots, $d_n$ be integers greater than one all prime to $c$.  In Lemma 2 which follows we will be considering solutions $(x_1, x_2, \dots, x_n, y_1, y_2, \dots, y_n)$ (or, equivalently, solutions $(A,B)$) to the congruence 
$$ d_1^{x_1} d_2^{x_2} \dots d_n^{x_n} + d_1^{y_1} d_2^{y_2} \dots d_n^{y_n} \equiv 0 \bmod c,  \eqno{(2.11)} $$
where, for $1 \le i \le n$, $\min(x_i, y_i) = 0$ and $\max(x_i, y_i) \ge 0$, taking $n \ge 1$, and 
$$A = d_1^{x_1} d_2^{x_2} \dots d_n^{x_n}, B = d_1^{y_1} d_2^{y_2} \dots d_n^{y_n},  (A,B) = 1.$$
Note the difference between (2.11) and (1.1): in (1.1) we had $\max(x_i, y_i) >0$; (2.11) allows $\max(x_i, y_i)=0$.  Also, whereas (1.1) requires $X<Y$, (2.11) does not require $d_1^{x_1} d_2^{x_2} \dots d_n^{x_n} < d_1^{y_1} d_2^{y_2} \dots d_n^{y_n}$.  

We say that two solutions $(x_{1,1}, \allowbreak x_{2,1}, \allowbreak \dots, x_{n,1},\allowbreak y_{1,1}, \allowbreak y_{2,1}, \allowbreak \dots, \allowbreak y_{n,1})$ and $\allowbreak (x_{1,2}, \allowbreak x_{2,2}, \allowbreak \dots, \allowbreak x_{n,2}, \allowbreak y_{1,2}, y_{2,2}, \allowbreak \dots, y_{n,2})$ to (2.11) are in the same {\it parity class} if, for each $i$, $\max(x_{i,1}, y_{i,1}) \equiv \max(x_{i,2}, y_{i,2}) \bmod 2$.  

Let $R$ be the set of distinct primes dividing $d_1 d_2 \dots d_n$, where $d_1$, \dots, $d_n$ are as in (2.11), and let $D_1$, $D_2$, \dots, $D_w$, $C_{D_1}$, $C_{D_2}$, \dots, $C_{D_w}$, and $K$ be defined as above Lemma 1 for this $R$.  We say that any solution $(A,B)$ to (2.11) (and hence the corresponding solution $(x_1, \dots, x_n, y_1, \dots, y_n)$ to (2.11)) is associated with the pair $\{ \German{c} , \conj{\German{c}} \}$ in the set $K$  when the pair $(A,B) $ is associated with the pair $\{ \German{c}, \conj{ \German{c}}  \}$.  

Let $M$ be the multiplicative group of residue classes $m \bmod c$ such that $m^2 \equiv 1 \bmod c$ and there are integers $s_1$, \dots, $s_n$ such that  $\prod_{i=1}^n d_i^{s_i} \equiv m \bmod c$.
Define $q$ to be one half times the cardinality of $M$.

\begin{lemma}  
For every parity class of solutions of (2.11) there is a subset $K'$ of $K$ of cardinality at most $q$ such that every solution in the parity class is associated with a pair in $K'$.    
\end{lemma}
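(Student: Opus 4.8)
Fix a parity class $P$. The plan is to argue in three steps: (i) the squarefree part $D$ of $AB$ is the same for every solution $(A,B)$ in $P$; (ii) each solution in $P$ determines a unit $\ell(A,B)\bmod c$, namely $A/f$ where $AB=Df^2$ with $f>0$, and this unit pins down the pair $\{\German{c},\conj{\German{c}}\}$ that the solution is associated with; (iii) all these units lie in a single coset of $M$ which is stable under negation, so there are at most $|M|=2q$ of them, whence at most $q$ associated pairs. For step (i): because $\min(x_i,y_i)=0$, every solution to (2.11) has $AB=\prod_i d_i^{\max(x_i,y_i)}$, which equals $\prod_i d_i^{\max(x_i,y_i)\bmod 2}$ times a perfect square, so the squarefree part $D$ of $AB$ depends only on the residues $\max(x_i,y_i)\bmod 2$, i.e.\ only on $P$. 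Since $D\mid\prod_{p\in R}p$ we have $D\in\{D_1,\dots,D_w\}$, so every solution in $P$ is associated with a pair in $C_D\subseteq K$.

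For step (ii), write $AB=Df^2$ with $f>0$; then $f$ is prime to $c$ (since $\gcd(AB,c)=\gcd(D,c)=1$) and $\gamma(A,B)=(A-B)+2f\sqrt{-D}$ in $\ratQ(\sqrt{-D})$. If $(A,B)$ is associated with $\{\German{c},\conj{\German{c}}\}$, relabel the pair so that $\German{c}\mid[\gamma(A,B)]$. The Observation gives $A-B\equiv 2fL\bmod c$, where $L$ is the key number of $\German{c}$; combining this with $A+B\equiv 0\bmod c$ and using that $c$ is odd yields $L\equiv A/f\bmod c$. Put $\ell(A,B)\equiv A/f\bmod c$: it is a unit, and $\ell(A,B)^2\equiv -D\bmod c$ (directly, $A^2\equiv A\cdot(-B)=-Df^2$). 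Since distinct ideals of $C_D$ have distinct key numbers, and $\German{c},\conj{\German{c}}$ have key numbers $L,-L$, the associated pair is precisely the pair of $C_D$ whose set of key numbers is $\{\ell(A,B),-\ell(A,B)\}$; hence $\ell(A,B)$ determines it. Finally, $(B,A)$ lies in the same parity class, and from $\gamma(B,A)=-\conj{\gamma(A,B)}$ one sees $(B,A)$ is associated with the same pair, with $\ell(B,A)\equiv B/f\equiv-\ell(A,B)\bmod c$.

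For step (iii), let $G=\langle d_1,\dots,d_n\rangle\le(\intZ/c\intZ)^{*}$ and fix a base solution $(A_0,B_0)\in P$ with $A_0B_0=Df_0^2$, $f_0>0$ (if $P$ contains no solution, take $K'=\emptyset$). Because $(A,B)$ and $(A_0,B_0)$ lie in the same parity class, $\max(x_i,y_i)-\max(x_{i,0},y_{i,0})=2\delta_i$ with $\delta_i\in\intZ$, so $(f/f_0)^2=\prod_i d_i^{2\delta_i}$ and therefore $f/f_0=\prod_i d_i^{\delta_i}$ as positive rationals. Hence $\ell(A,B)\,\ell(A_0,B_0)^{-1}\equiv(A/A_0)(f_0/f)\equiv\prod_i d_i^{x_i-x_{i,0}-\delta_i}\bmod c$ lies in $G$, and its square is $\equiv(-D)/(-D)\equiv 1\bmod c$, so it lies in $M$. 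Thus $S:=\{\ell(A,B):(A,B)\in P\}$ is contained in the coset $\ell(A_0,B_0)M$, so $|S|\le|M|=2q$, and $S=-S$ by the $(A,B)\leftrightarrow(B,A)$ remark. Since each element of $S$ is a unit modulo the odd number $c$, no element equals its own negative, so $S$ is a disjoint union of $|S|/2$ pairs $\{v,-v\}$, and the set $K'$ of pairs of $K$ associated with solutions in $P$ has $|K'|=|S|/2\le q$. (A nonempty $P$ forces a solution with $A\equiv-B\bmod c$ and $A,B\in G$, hence $-1\in G$, so $-1\in M$ and $|M|$ is a positive even integer; this is why $q$ is a positive integer and the statement is meaningful.)

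The step I expect to be the main obstacle is the identification of $f/f_0$ modulo $c$ in step (iii): belonging to a common parity class is exactly the hypothesis that makes $f/f_0$ an integer power-product of the $d_i$, hence an element of $G$, and this --- together with the automatic relation $\ell(A,B)^2\equiv -D\bmod c$ --- is what pushes all the ratios into $M$. The remaining points are routine bookkeeping: that $f$, the $\ell(A,B)$, and their ratios are units mod $c$ (immediate from $\gcd(AB,c)=\gcd(D,c)=1$), and that the uniqueness assertions invoked when $D\in\{1,3\}$ are exactly those recorded in the paragraph before Lemma 1.
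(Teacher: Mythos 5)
Your proof is correct and follows essentially the same route as the paper: you use Observation 1 to show that the key number of the associated ideal is $A/f \bmod c$ (the paper's version of this is $D_1X\conj{Y}\equiv JL$), then show that the ratio of the key numbers of two solutions in the same parity class lies in $M$, and count the pairs $\{m,-m\}$. Your explicit verification that $f/f_0\in G$ is handled in the paper by noting that $J$ and $D$ depend only on the parity class and absorbing $X,Y$ into the group $S$, but this is only a cosmetic difference.
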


\begin{proof}
Let $(A,B)$ be a solution to (2.11) with $A = d_1^{x_1} d_2^{x_2} \dots d_n^{x_n}$, $B = \allowbreak  d_1^{y_1} d_2^{y_2} \allowbreak  \dots \allowbreak  d_n^{y_n}$.  
Let $V$ be the set of all $d_i$ such that $\max(x_i, y_i)$ is odd, so that we can write 
$$ \prod_{d_i \in V}  d_i = J^2 D \eqno{(2.12)} $$
where $J$ is a positive integer and $D$ is the least positive integer such that 
$ (\frac{AB}{D})^{1/2} \in \intZ$.  Let $D_1$ be the product of all $d_i \in V$ such that $d_i \mid A$, and let $D_2$ be the product of all $d_i \in V$ such that $d_i \mid B$.  
Now we have, for some positive integers $G$ and $H$, $A = D_1 G^2$, $B = D_2 H^2$, where $D_1 D_2 = J^2 D$, so that, for some pair $(\German{c}, \conj{\German{c}}) \in K$, one of $\German{c}$ or  $\conj{\German{c}}$, say $\German{c}$, satisfies 
$$ \German{c} \mid [\gamma(A,B)] = [ D_1 G^2 - D_2 H^2 + 2 G H J \sqrt{-D} ]. $$
Since $D_1 G^2 \equiv -D_2 H^2 \bmod c$, we have 
$$ \German{c} \mid [ 2 D_1 G^2 + 2 G H J \sqrt{-D}],$$
so that, since $2 \nmid c$, by Observation 1 we have 
$$  D_1 G^2 \equiv G H J L \bmod c,  \eqno{(2.13)} $$
where $L$ is defined as in Observation 1.   

Let $S$ be the set of all integers $k$ such that there exist integers $s_1$, $s_2$, \dots, $s_n$ such that $k \equiv d_1^{s_1} d_2^{s_2} \dots  d_n^{s_n} \bmod c$.  $D_1 \in S$ and $D_2 \in S$, so that $G \in S$ and $H \in S$.  So, by (2.13), we have 
$$ J L \in S. \eqno{(2.14)}$$

Now assume that congruence (2.11) has a solution $(A_1, B_1)$ distinct from $(A,B)$ but in the same parity class, so that (2.12) holds for the solution $(A_1, B_1)$.  Let $\German{c}_1$ divide $[\gamma(A_1, B_1)]$ and let $L_1$ be defined as $L$ is defined in Observation 1 (with $\German{c}_1$ replacing $\German{c}$).  The same argument as above gives 
$$ J L_1 \in S \eqno{(2.15)}$$
Let $L_1 \equiv \delta L \bmod c$ where $-(c-1)/2 \le \delta \le (c-1)/2$.  $L^2 + D \equiv  L_1^2 + D \equiv \delta^2 L^2 + D \equiv 0 \bmod c$ so that $\delta^2 \equiv 1 \bmod c$.  Thus, recalling (2.14) and (2.15), we have
$$ \delta \in S, \delta^2 \equiv 1 \bmod c.  \eqno{(2.16)} $$ 

Since we assumed that (2.11) is solvable, we see that $-1 \bmod c \in M$ and so, if $m \in M$ then also $-m \in M$. 
If (2.11) has yet another solution $(A_2, B_2)$ in the same parity class as $(A,B)$ with $L_2 \equiv -\delta L \bmod c$, then the solutions $(A_1, B_1)$ and $(A_2, B_2)$  are associated with the same $\{ \German{c} , \conj{ \German{c} } \}$.  

Since $\delta$ is in one of the residue classes $m \in M$, we conclude that the pair $\{ \German{c}, \conj{\German{c}} \} \in K$ 
with which a solution in our parity class is associated is uniquely determined by a pair $ \{ m, -m \}$ 
with $m \in M$.  Since the number of such pairs is equal to $q$, this proves our lemma. 
\end{proof}

Let $p$ be the number of parity classes for 
$(\max(x_1, y_1), \allowbreak  \dots, \allowbreak  \max(x_n, y_n))$ 
which occur in solutions to (2.11).  Let $q$ be as in Lemma 2.  Combining Lemma 1 and Lemma 2, and letting $N$ be the number of solutions $(X, Y, z)$ to (1.1), we have 
$$ N \le p q +1, \eqno{(2.17)} $$
where the \lq $+1$' in (2.17) is needed only when Case 1 or Case 2 of Lemma 1 occurs.  

If the set $\{ \log(d_1), \log(d_2), \dots, \log(d_n) \}$ is linearly independent over $\intZ$, then, in (1.1), $X$ and $Y$ uniquely determine $x_i$, $y_i$ for all $1 \le i \le n$, so that, if $N_1$ is as in Theorem 1, then $N_1 \le pq + 1$.

(When the set $\{ \log(d_1), \log(d_2), \dots, \log(d_n) \}$ is not linearly independent over $\intZ$, or when one or more $d_i$ in Theorem 1 is a perfect square, then there may be more than one parity class corresponding to a given $D$, but this does not affect (2.17).)

\section{$pq = 2^{n-1}$}  

For Section 3 we define the congruence relation modulo $c$ to be extended to rational numbers with denominators coprime to $c$.  

If $(x_1, \dots, x_n, y_1, \dots, y_n)$ is a solution of (2.11) then $\bft = (t_1, \dots, t_n) = (x_1 - y_1, \dots, x_n - y_n)$ satisfies
$$ d_1^{t_1} \dots d_n^{t_n} \equiv -1 \bmod c.  \eqno{(3.1)} $$
Further, two solutions of (2.11) are in the same parity class if and only if the corresponding vectors $\bft$ with
(3.1) are congruent modulo 2. So if $W$ denotes the set of vectors $\bft \in \intZ^n$ satisfying (3.1), and 
$\varphi : \intZ^n \rightarrow \intZ^n/2\intZ^n$ is the group homomorphism sending $\bft$ to $\bft \bmod 2$, we see that the number $p$ of parity classes of solutions of (2.11) is $\#\varphi(W)$, the cardinality of $\varphi(W)$.  Assume that $W$ is nonempty, otherwise $p=0$ and we are done.  Take $\bft_0 \in W$.  Then $W = \bft_0 + U = \{\bft_0 + \bft : \bft \in U \}$, where $U$ is the subgroup of $\intZ^n$ given by 
$$ U = \{ \bft = (t_1, \dots, t_n) \in \intZ^n : d_1^{t_1} \dots d_n^{t_n} \equiv 1 \bmod c \}.  $$
So $\varphi(W) = \varphi(\bft_0) + \varphi(U)$, hence $\#\varphi(W) = \#\varphi(U)$.  This leads to 
$$ p = \#\varphi(U) = \#( U/ U\cap 2\intZ^n ). \eqno{(3.2)} $$
Next, let $q$ be defined as in the paragraph immediately preceding Lemma 2.  Let $U'$ be the subgroup $U' = \{ \bfs = (s_1, \dots, s_n) \in \intZ^n : 2\bfs \in U \}$.  Then 
$$ \varphi' : U' \rightarrow (\intZ/c\intZ)^*, \varphi'(\bfs) = d_1^{s_1} \dots d_n^{s_n} \bmod c  $$
is a group homomorphism with image 
$$ M = \{ \delta \in (\intZ/c\intZ)^* : \delta^2 \equiv 1 \bmod c, \exists \bfs \in \intZ^n {\rm\ with \ } \delta \equiv d_1^{s_1} \dots d_n^{s_n} \bmod c \}$$
and kernel $U$.  Hence $M \cong U'/U$.  So 
$$ q = \frac{1}{2} \#M = \frac{1}{2} \#(U'/U).  \eqno{(3.3)} $$
Combining (3.2) and (3.3) with the facts that $U \cap 2\intZ^n = 2U'$ and that $U'$ is a free abelian group of rank $n$, we arrive at 
$$ pq = \frac{1}{2} \#(U/2U') \cdot  \#(U'/U) = \frac{1}{2} \#(U'/2U') = 2^{n-1}.  $$
This completes the proof of Theorem 1.

\section{Proofs of Corollaries}  

For the proof of Corollary 1, we will use the following more general result, which is an immediate consequence of Theorem 1:

\begin{lemma} 
Let $c$ be an odd integer greater than one, let $r$ and $s$ be positive integers prime to $c$, and let $d_1$, $d_2$, \dots, $d_n$ be integers greater than one which are prime to $c$, $n \ge 1$.  Then there are at most $2^{n}+1$ solutions $(X, Y, z)$ to the equation 
$$ rX + sY = c^z,   \eqno{(4.1)} $$
where $z>0$, $X = \prod_{i=1}^n d_i^{x_i}$, $Y = \prod_{i=1}^n d_i^{y_i}$, $\max(x_i, y_i) > 0$, $\min(x_i, y_i) = 0$, and, when $rs=1$, $X<Y$. 
\end{lemma}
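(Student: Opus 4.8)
The plan is to reduce Lemma 3 to Theorem 1 by absorbing the coefficients $r$ and $s$ into a larger system of equations, at the cost of introducing two extra ``virtual'' prime-power variables. First I would treat the case $rs = 1$ separately: then (4.1) is exactly (1.1), so Theorem 1 gives $N \le 2^{n-1}+1 \le 2^n + 1$ and we are done. So assume $rs > 1$. The idea is that a solution $(X, Y, z)$ to $rX + sY = c^z$ becomes, after multiplying through, a solution to an equation of the shape covered by Theorem 1 in which the set of allowed bases has been enlarged.

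The cleanest way I see to do this is the following. Let $R_0$ be the set of primes dividing $rs$ but not dividing $d_1 \cdots d_n$, and note these are all coprime to $c$. Form the enlarged base list $d_1, \dots, d_n, r, s$ (or, if one prefers genuinely multiplicatively independent bases, replace $r$ and $s$ by the primes in $R_0$ together with suitable prime powers; the parenthetical remark at the end of Section 2 and the statement of Theorem 1 both allow the $d_i$ to be non-prime and even multiplicatively dependent, so using $r$ and $s$ themselves as two of the bases is legitimate). Write a solution of (4.1) as $(X,Y,z)$ with $X = \prod d_i^{x_i}$, $Y = \prod d_i^{y_i}$. Then $rX + sY = c^z$ exhibits $rX$ and $sY$ as two integers, built from primes in $R \cup R_0$, which sum to $c^z$; their gcd is $\gcd(r,s)$, which need not be $1$. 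To force coprimality, divide by $g = \gcd(rX, sY)$: but $g$ need not divide $c^z$. Instead I would argue directly on the pair $(rX, sY)$ using the machinery of Section 2, which only requires $A + B \equiv 0 \bmod c$ and does \emph{not} require $(A,B)=1$ in the congruence setup — but Lemma 1 as stated does need $AB \in T$ with $(A,B)=1$.

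So the more robust route is: apply Theorem 1 directly to the equation $X' + Y' = c^z$ in the $n+2$ bases $d_1, \dots, d_n, r, s$, where $X' = r\prod d_i^{x_i}$ and $Y' = s \prod d_i^{y_i}$. A genuine solution of (4.1) with $X < Y$ (when $rs=1$) or with no ordering constraint (when $rs>1$) maps to a solution of this enlarged equation, after possibly swapping $X'$ and $Y'$ to enforce $X' < Y'$; the conditions $\min(x_i,y_i)=0$ persist for the $d_i$-exponents, and the $r$- and $s$-exponents are each $1$ in one of $X',Y'$ and $0$ in the other (so $\min = 0$, $\max = 1 > 0$) — \emph{provided} $r$ and $s$ are coprime and coprime to every $d_i$, which fails in general. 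The fix is standard: repeatedly pull common factors of $r$ and $s$ (and of $r,s$ with the $d_i$) into the $d_i$ until the enlarged base list is pairwise-coprime-enough for the hypotheses of Theorem 1; since Theorem 1 permits multiplicatively dependent bases, the only real requirement is $\min(x_i,y_i)=0$ and $\max(x_i,y_i)>0$ for each base, and one checks this is arranged by writing $r = \prod_i d_i^{a_i} \cdot r_0$, $s = \prod_i d_i^{b_i}\cdot s_0$ with $\gcd(r_0 s_0, d_1\cdots d_n)=1$, folding the $d_i^{a_i}, d_i^{b_i}$ into the $X$- and $Y$-sides, and using $r_0, s_0$ as the two new bases (dropping either if it equals $1$). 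The number of new bases is at most $2$, so Theorem 1 yields $N \le 2^{(n+2)-1}+1 = 2^{n+1}+1$ — which is \emph{weaker} than the claimed $2^n+1$.

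To recover the sharp exponent $n$ rather than $n+2$, I expect one must not simply add two free bases but observe that the $r_0$- and $s_0$-exponents are pinned: they take only the values $0$ and $1$, and in any solution the pattern is $(r_0\text{-exp}, s_0\text{-exp}) = (1,0)$ on one side and $(0,1)$ on the other (or the reverse after swapping). In the language of Section 3, this forces the parity vector $\bft = (x_1-y_1, \dots)$ to have its last two coordinates \emph{fixed} modulo $2$ (equal to $1$ and $1$), so the group $U$ and the count $p = \#\varphi(U)$ in (3.2) is unchanged by the two extra coordinates, while $q$ from (3.3) also picks up no new factor of $2$ because $r_0, s_0$ contribute no new square-trivial residues $\delta$ with $\delta^2 \equiv 1$. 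Thus $pq$ stays $2^{n-1}$, not $2^{n+1}$, and (2.17) gives $N \le 2^{n-1}+1 \le 2^n+1$. \textbf{The main obstacle} is making this parity/counting bookkeeping rigorous: one must show that the free rank of the relevant lattice $U'$ in the enlarged problem is still $n$ (not $n+2$), equivalently that adjoining $r_0$ and $s_0$ to the base list adds nothing to the rank because no solution ever uses a power of $r_0$ or $s_0$ higher than the first — i.e. the enlarged equation, restricted to its actual solution set, behaves like the original with only $n$ degrees of freedom. Once that is pinned down, the bound $2^n+1$ (in fact $2^{n-1}+1$) follows, and since Lemma 3 only claims $2^n+1$ there is comfortable slack. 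I would write the proof by first doing the $rs=1$ case in one line, then setting up the folding $r = \prod d_i^{a_i} r_0$, $s = \prod d_i^{b_i} s_0$, then invoking Theorem 1 in the form that counts parity classes, and finally remarking that the two new coordinates are parity-fixed so contribute no factor, concluding $N \le 2^{n-1}+1 \le 2^n + 1$.
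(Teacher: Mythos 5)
There is a genuine gap, and it sits exactly where you flagged ``the main obstacle.'' Your reduction to Theorem 1 via two extra bases honestly yields only $N \le 2^{(n+2)-1}+1 = 2^{n+1}+1$, and the repair you propose --- that the two new coordinates contribute no factor, so that $pq$ stays at $2^{n-1}$ --- is not merely unproved but contradicts the identity Section 3 establishes: there $pq = \frac{1}{2}\#(U'/2U')$, where $U'$ is free abelian of rank equal to the number of bases, so for the enlarged system the product is rigidly $2^{n+1}$, not $2^{n-1}$. Your specific claim that ``$r_0,s_0$ contribute no new square-trivial residues'' is false in general: take $c=15$, $d_1=2$, $r=11$; then $11^2\equiv 1 \bmod 15$ but $11$ is not congruent to any power of $2$ modulo $15$, so adjoining $r$ strictly enlarges the group $M$ and hence $q$. (The parity half of your bookkeeping is the salvageable half --- the $r_0$- and $s_0$-exponents are pinned, so $p$ need not grow --- but that recovers at most one of the two lost factors of $2$, and even it needs care, since the restricted solution set is no longer a full coset of the subgroup $U$ on which the computation $p=\#\varphi(U)$ relies.) Note also that your target $2^{n-1}+1$ is stronger than what the lemma asserts; the doubling to $2^n$ is a real phenomenon tied to the loss of the $A\leftrightarrow B$ symmetry when $rs>1$ (which is also why the hypothesis $X<Y$ is imposed only in the case $rs=1$), so you should not expect to push the bound back down to $2^{n-1}+1$.

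The paper's proof does not enlarge the base set at all. It reruns Sections 2 and 3 for the congruence $rA+sB\equiv 0 \bmod c$, i.e.\ $A/B\equiv -s/r \bmod c$: the sets $S$, $M$, $U$, $U'$ are unchanged (still built from $d_1,\dots,d_n$ alone), and the single step that breaks is the deduction, from solvability of the congruence, that $-1\bmod c$ lies in $M$. Without $-1\in M$ the residues $\delta$ and $-\delta$ need not both occur, the pairs $\{m,-m\}$ no longer halve the count, and one must take $q=\#M=\#(U'/U)$ in place of $\frac{1}{2}\#M$. This doubles $pq$ from $2^{n-1}$ to $2^{n}$, and Lemma 1 supplies the $+1$, giving exactly the claimed $2^{n}+1$. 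If you want to keep the spirit of your write-up, the correct move is therefore not to add bases but to isolate which single inequality in Sections 2--3 used $rs=1$ and weaken only that one.
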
 

\begin{proof} 
We proceed as in Sections 2 and 3 of the proof of Theorem 1, except that $\delta \equiv -1 \bmod c $ is no longer necessarily possible, so that we need to replace (3.3) by the equation $q=\#(M)=\#(U'/U)$, doubling the bound obtained in Section 3.  We then apply Lemma 1 to obtain the bound $2^{n}+1$.     
\end{proof}

\begin{proof}[Proof of Corollary 1]  
After Lemma 3 it suffices to point out that (1.3) cannot satisfy Case 2 of Lemma 1 since, as pointed out in the proof of Lemma 1, the presence of a Case 2 solution would require a further solution in which $\min(ra^x, sb^y) = 1$, which is impossible in (1.3).  
\end{proof}

Corollary 1 can also be proven without using the methods of Theorem 1 and Lemma 3, instead using the methods of \cite{ScSt7}: in this way, a result very similar to Corollary 1 is obtained in \cite{DYL}, which came to our attention after completion of this paper.   \cite{DYL} also gives a condition under which (1.3) has at most two solutions.   

\begin{proof}[Proof of Corollary 2]
We need to consider all cases of (1.1) in which $\{ d_1, d_2, \allowbreak  \dots, \allowbreak  d_n \}$ is a subset of $R$ such that this subset does not lead to an immediate contradiction modulo $c$.  For $k$ such that $0 \le k \le w$, the number of such subsets with cardinality $k$ is at most $\binom{w-1}{k-1}$, since we do not need to consider subsets which do not contain the prime 2, since $c$ is odd (note $k = 0$ is impossible).  Thus, letting $N_0$ be the number of solutions $(A,B,z)$ to (1.4), and letting $g = k-1$, we have 
$$ 
\begin{aligned}
N_0 &\le \sum_{k=1}^{w} \binom{w-1}{k-1}  \left( 2^{k-1} + 1 \right)       \\
  &= \sum_{g=0}^{w-1} \binom{w-1}{g} 2^g + \sum_{g=0}^{w-1} \binom{w-1}{g}  \\
  &= 3^{w-1} + 2^{w-1}.   \\
\end{aligned} 
$$
\end{proof}

To put Corollary 2 in the context of earlier results in this paper, let $p$ and $q$ be as in Section 2, and let $F= pq$.  Using the results of Section 2, it can be easily shown that there are $F$ infinite families of solutions to (2.11), each infinite family corresponding to a combination of a parity class with an ideal factorization, where here \lq parity class' refers to one of the $p$ parity classes allowing solutions to (2.11), and \lq ideal factorization' refers to one of the $q$ ideal factorizations with which a solution to (2.11) in a given parity class can be associated.     Note that $p=F=0$ if and only if $u_c(d_i)$ is odd for every $i$.  

The bound in Corollary 2 can equal the actual number of solutions only when, for every subset of $R$ containing 2, $p \ne 0$ and every one of the $F$ infinite families of solutions to (2.11) yields a solution to (1.1), with one such family yielding two solutions to (1.1).  This occurs when $w=1$, $R=\{ 2\}$, $c=3$ (which gives two solutions), and when $w=2$, $R=\{5,2\}$, $c=3$ (which gives five solutions).  That these are the only such cases follows from the results in the next section.

 \section{Sharper results for $n \le 2$}  

Let $N$ and $n$ be as in Theorem 1.  In this section we give improvements on the bound on $N$ for $n \le 2$.  When $n=1$, it is an elementary result that the only case with $N>1$ is $d_1 = 2$, $c=3$ (see Theorem 4 of \cite{Cas}, Theorem 6 of \cite{Lev}, or Lemmas 3.1 and 3.2 of \cite{ScSt6} for three different treatments).  

When $n=2$ we have Theorem 2, whose proof follows.

\begin{proof}[Proof of Theorem 2]    
Recalling that the \lq +1' in (2.17) is needed only when Case 1 or Case 2 of Lemma 1 occurs, we see that we can assume that (1.1) has a Case 1 or Case 2 solution when (1.1) has more than two solutions.  Take $d_1 > d_2$.  The only instance of Case 1 of Lemma 1 for which we can have $n=2$ is $(d_1, d_2, c) = (10, 3, 13)$, which has only two solutions (in the notation of Theorem 1, we have $2 \notdiv \max(x_1, y_1)$, by consideration modulo 13; since $3^2 + 2^5 \cdot 5 = 13^2$, there can be no solution with $2 \mid \max(x_2, y_2)$, by Lemma 1; thus there can be no third solution by Lemma 1).    So if (1.1)  with $n=2$ has three solutions, we must have Case 2 of Lemma 1 and (1.1) must have at least the two solutions $(X_1, Y_1, z_1)$ and $(X_2, Y_2, z_2)$ with   
$$ \vert X_1 - Y_1 \vert = 1, (X_2, Y_2) = ( 1, 4 X_1 Y_1), z_2 = 2 z_1.  \eqno{(5.1)} $$
Since we are considering (1.1) with $n=2$, from (5.1) we see that one of $d_1$, $d_2$ must be 2 or 4, so it suffices to let $d_2=2$.  

For convenience, in what follows, we replace the restriction $X<Y$ in (1.1) by the restriction $ 2 \mid Y$ (which serves the same function of ensuring that $(X,Y,z)$ is not considered a different solution than $(Y,X,z)$).  So we can take $( X_1, Y_1 ) = ( d_1^{x}, 2^{y} ) = ( 2^{y} + (-1)^\epsilon, 2^{y} )$ where $\epsilon \in \{0,1\}$.  If $\epsilon = 1$, then it is a familiar elementary result that $x = 1$, and we have the infinite family which is the final exception in the formulation of Theorem 2.  Note that members of this exceptional infinite family are the only cases in which (1.1) has more than one solution with $\min(X,Y)=1$, by Lemma 3.2 of \cite{ScSt6}.   

So it remains to consider the case $\epsilon = 0$.  Let $y = g-1$ so that $c^{z_1} = 2^{g}+1$.  If $d_1^{x} = 2^{g-1} + 1 = 9$ we can assume $d_1=3$ and take $(d_1, d_2, c) = (3, 2, 17)$; by Lemma 2 of \cite{ScSt1}, we see that this case allows only one solution with $\min(X,Y) > 1$, so that $(d_1, d_2, c) = (3,2,17)$ allows only two solutions.  If $c^{z_1} = 2^g + 1 = 9$, we have, in addition to the two solutions of (5.1), the solution $5^2 + 2 = 3^3$ and we obtain the second exception listed in the formulation of the theorem.   So now it is a familiar elementary result that we can take $x = z_1 = 1$.  

Write 
$$ X + Y = c^z \eqno{(5.2)} $$
where (5.2) is (1.1) with 
$$ n = 2, d_1 = d = 2^{g-1}+1, d_2 = 2, c = 2^{g}+1.  $$
(5.2) has the two solutions 
$$ 2^{g-1} + d = c \eqno{(5.3)} $$
and 
$$ 1 + 2^{g+1} d = c^2. \eqno{(5.4)} $$
We need to show that (5.3) and (5.4) are the only solutions to (5.2), not including the first exceptional case in the formulation of the theorem.  

$g$ is the least positive integer such that
$$ 2^g \equiv -1 \bmod c, \eqno{(5.5)} $$
so 
$$ u_c(2) = 2g  \eqno{(5.6)} $$
where $u_c(a)$ is the least positive integer $\mu$ such that $a^\mu \equiv 1 \bmod c$.  
We have 
$$d \equiv -2^{g-1} \equiv 2^{2g-1} \bmod c. \eqno{(5.7)}$$
Therefore, by (5.5) and (5.6), for any solution $d^{x}+2^{y} = c^{z}$, we have $2^{x(2g-1)} \equiv d^{x} \equiv -2^y \equiv 2^{y+g} \bmod {c}$, so that $x+y+g \equiv 0 \bmod 2g$.  Thus we have 
$$ 2 \mid g \implies 2 \mid x-y, 2 \nmid g \implies 2 \nmid x-y.  \eqno{(5.8)}$$
By (5.7), for any pair of integers $s_1$ and $s_2$ there exists a nonnegative integer $k < 2g$ such that 
$$ d^{s_1} 2^{s_2} \equiv 2^k \bmod c.  \eqno{(5.9)}  $$  
From (5.5) and (5.6) we see that $(2^k)^2 \equiv 1 \bmod c$ only if $g \mid k$, that is, only if $2^k \equiv \pm 1 \bmod c$.   So from (5.9) we see that, if $\delta \equiv d^{s_1} 2^{s_2} \bmod c$ satisfies (2.16), then $\delta \equiv \pm 1 \bmod c$.  So any two solutions to (5.2) in the same parity class must be associated with the same pair $\{ \German{c}, \conj{ \German{c}} \}$ in $K$, where $K$ is as in Section 2 (recall the proof of Lemma 2).   Thus, (5.3) and (5.4) are the only solutions to (5.2) such that $XY = d^{k_1} 2^{k_2}$ where $k_1$ is odd and $k_2 - g$ is odd.  Also, recall (5.2) has no further solutions with $\min(X,Y)=1$, by Lemma 3.2 of \cite{ScSt6}.  

Thus, if (5.2) has another solution $(X_3, Y_3, z_3)$, then, by (5.8), we can take  
$$ \{ X_3, Y_3 \} = \{ d^{x_3}, 2^{y_3} \} $$
where $x_3$ is even and $y_3 - g$ is even (here the subscripts are not as in Theorem 1 but rather are used simply to indicate that we are dealing with a third solution).  

Suppose $g$ is odd so that $y_3$ is odd.  Since $2^{g-1} \equiv -1 \bmod d$, we have $2^{y_3} \not\equiv \pm 1 \bmod d$.  But $c^{z_3} \equiv \pm 1 \bmod d$, a contradiction.  

So $g$ is even, so that we have $2 \mid x_3$, $2 \mid y_3$.  Since when $g$ is even $d \equiv 0 \bmod 3$ and $c \equiv 2 \bmod 3$, we must also have $2 \mid z_3$, so that $y_3 > 2$.  This gives rise to the Pythagorean triple $(d^{x_3/2}, 2^{y_3/2}, c^{z_3/2}) = (a^2-b^2, 2ab, a^2 + b^2)$ where $a = 2^{(y_3/2) - 1}$ and $b = 1$, so that $2^{y_3 - 2} -1 = d^{x_3 / 2} = 2^{g-1} + 1$ (noting $x_3/2 = 1$ since $d^{x_3/2} + 1$ is a power of 2), hence $d^{x_3/2}=d=3$, $g=2$, $c=5$, $x_3 = 2$, $y_3=4$, and we obtain the first exceptional case in the theorem.  
\end{proof}

There are many cases with $N=2$ when $n=2$:  there are at least five infinite families of such cases, and many anomalous cases which are not members of known infinite families (the anomalous case with the largest $c^z$ which we have found is $10^5 + 41^3 = 411^2$ which has the second solution $1 + 10 \cdot 41 = 411$).  It seems to be a difficult problem to estimate the nature and extent of such double solutions; if one excludes from consideration cases in which $\min(X,Y)=1$, then a conjecture on double solutions is given at the end of \cite{ScSt7}.

The contribution of the referee to this paper is significant: the lengthy referee's report deals with many details, and also provides a major improvement in Section 3, where we have replaced our original treatment with an essentially new and shorter treatment given by the referee.  We are certainly grateful for the referee's quick response, detailed attention, and important insight.

\bigskip 

\noindent
{\bf Appendix 1: $n=N=2$}   

\bigskip 

We investigate the case $n=N=2$, now allowing $c$ to be even or odd in (1.1).  In this appendix we take $d_1 < d_2$ (whereas in Theorem 2 we took $d_1 > d_2$).

A computer search reveals many cases of double solutions when $n=2$.  
Many of these cases are members of one of the following infinite families:
\begin{align*} 
(I) \quad (d_1, d_2, c) &= (k, \frac{k^m-1}{k-1}, \frac{k^{m+1}-1}{k-1}):\\
&  k^m+ \frac{k^m-1}{k-1}= \frac{k^{m+1}-1}{k-1}, k \frac{k^m-1}{k-1} +1 =\frac{k^{m+1}-1}{k-1}
\end{align*}
where $k \ge 2$ and $m \ge 2$ are integers.  For $k=2$, (I) gives the first and third solutions to the infinite family in the statement of Theorem 2.  All other choices of $k$ give exactly two solutions.

Next we have   
\begin{align*}
(II) \quad (d_1, d_2, c) &= (2, 2^r \pm 1, 2^{r+1} \pm 1):\\
& 2^r + (2^r \pm 1) = 2^{r+1} \pm 1, 2^{r+2}(2^r \pm 1) + 1 = (2^{r+1} \pm 1)^2
\end{align*}
for $r \ge 1$ where all the signs agree. Taking the lower sign with $r \ge 2$ gives the first and second solutions to the infinite family in the statement of Theorem 2.  Taking the upper sign with $r=1$   gives the first and second solutions to $(d_1, d_2, c) = (2,3,5)$ which is the first exceptional case in the statement of Theorem 2.  Taking the upper sign with $r=2$ gives the first and second solutions to the case $(d_1, d_2, c)=(2,5,3)$ which is the second exceptional case in the statement of Theorem 2. All other choices of $r$ and sign give exactly two solutions.  Taking the lower sign with $r=1$ gives the $n=1$ case mentioned at the beginning of Section 5.  When $r=2m$ is even, we can derive related infinite families with $d_1=4$, and these derived families have exactly two solutions. A further infinite family  can be derived using the following observation:

\begin{Observation*}  
If we have a double solution $(a,b,c): a^q b+ 1 = c^r, a^s + b = c^t$, we can obtain another double solution $(a,c,c^r+a^{s+q}): a^q c^t + 1 = c^r + a^{s+q}, a^{s+q} + c^r = c^r + a^{s+q}$.  
\end{Observation*}

Using Observation A, from (II) we obtain 
\begin{align*}
(III) \quad (d_1,\  d_2, c) &= (2, 2^r \pm 1, 2^{2r+1} \pm  2^{r+1}+1): \\
& 2^{2r} + (2^r \pm 1)^2 = 2^{2r+1} \pm 2^{r+1}+1, \\
&  2^{r+1}(2^r \pm 1) + 1= 2^{2r+1} \pm 2^{r+1}+1
\end{align*}  
for $r \ge 2$, noting that $r$ in (III) corresponds to $r+1$ in (II).  Taking $r=1$ with the upper sign in (III) gives a case not derived in the same way from (II).  Taking $r=1$ with the lower sign in (III) gives an $n=1$ case with just one solution.  Taking the lower sign with $r=2$ in (III) gives the second and third solutions of the first exceptional case in the statement of Theorem 2.  All other choices of $r$ and sign in (III) give exactly two solutions.  

The next infinite family has $(d_1, d_2, c) = (2^r-1, 2^{r}+1, 2)$ which can be shown to include all $(d_1, d_2, c)$ with $c=2$ for which (1.1) has more than one solution, except for the case $(d_1, d_2, c)=(3,13,2)$.  (This can be shown using Theorem 6 of \cite{Sc}, Lemma 3.2 of \cite{ScSt6}, and (8) of Theorem 1 of \cite{Sc}.  Note that (8) of Theorem 1 of \cite{Sc} does not include the case $(d_1, d_2, c)=(3,5,2)$, since this case gives four solutions ($3+5=2^3$, $3 \cdot 5 + 1 = 2^4$, $3^3 + 5 = 2^5$, $5^3 + 3 = 2^7$); in all other cases, the infinite family which follows gives exactly two solutions.)  
\begin{align*} 
(IV) \quad (d_1, d_2, c) &= (2^r-1, 2^{r}+1, 2): \\
& (2^r-1)+(2^r+1) = 2^{r+1}, (2^r-1)(2^r+1)+1 = 2^{2r}
\end{align*}
for $r \ge 2$.  Since in (IV) the exponent of each $d_i$ ($i \in \{ 1,2\}$) is the same in both solutions, we can apply Observation A in two different ways, obtaining a new derivation of (III).  

Next is the only known infinite family with $n = 2$ giving two solutions to (1.1) both of which have $X>1$: 
\begin{align*}
(V) \quad (d_1, d_2, c) &= (2, 2^r-1, 2^{r}+1):\\
& 2 + (2^r-1) = 2^{r} + 1, 2^{r+2}+(2^r-1)^2  = (2^{r}+1)^2
\end{align*} 
for $r \ge 2$.  All cases of (V) have exactly two solutions.  From (V) we also obtain a new case with $N=2$: 
$$ (d_1, d_2, c) = (2, 7, 3).$$

We now list anomalous double solutions which do not seem to belong to infinite families.

Several such anomalous examples are double solutions to Pillai's equation, that is, cases in which the exponent of one of $d_i$ ($i \in \{1,2\}$) is the same in both solutions; see the list of such double solutions in (1.2) of [M. Bennett, On Some Exponential Equations of
S. S. Pillai, {\it Canad. J. Math.}, {\bf 53}, (2001)].  
The Pillai double solution $3-2= 3^2-2^3 = 1$ corresponds to the case $(d_1, d_2, c) = (2,1,3)$ giving the $n=1$ case mentioned at the beginning of Section 5, and two other Pillai double solutions, $2^3-3=2^5-3^3=5$ and $2^3-5 = 2^7-5^3=3$, correspond to the exceptional third and fourth solutions to the case $(d_1, d_2, c) = (3,5,2)$ mentioned above.  The other three known Pillai solutions with relatively prime terms give these anomalous examples:
\begin{align*}
(d_1, d_2, c) &= (3,13,2): \\
& 3+13 = 2^4, 3^5 + 13=2^8.\\
(d_1, d_2, c) &= (2,89,91):\\
&  2+89=91, 2^{13}+89=91^2.\\
(d_1, d_2, c) &= (3,10,13):\\
&  3+10 = 13, 3^7 + 10=13^3.
\end{align*}

From any Pillai double solution we can obtain related solutions.  
From $(d_1, d_2, c) = (3,13,2):  3+13 = 2^4, 3^5 + 13=2^8$  we obtain
\begin{align*}
(d_1, d_2, c) &= (2,3,259):\\
&  2^8+3 = 2^4+3^5=259.
\end{align*}
From $(d_1, d_2, c) = (2,89,91):  2+89=91, 2^{13}+89=91^2$ we obtain 
\begin{align*}
(d_1, d_2, c) &= (2,91,8283):\\
&  2 + 91^2 = 2^{13} + 91=8283.
\end{align*}
From $(d_1, d_2, c) = (3,10,13):  3+10 = 13, 3^7 + 10=13^3$ we obtain 
\begin{align*}
(d_1, d_2, c) &= (3,13,2200):\\
&  3+13^3 = 3^7 + 13=2200.
\end{align*}
From $(d_1,d_2,c) = (2,1,3): 2+1=3, 2^3+1=3^2$ we obtain 
\begin{align*}
(d_1, d_2, c) &= (2,3,11):\\
&  2+3^2= 2^3+3=11.  
\end{align*}
From various combinations of three solutions of $(d_1, d_2, c) = (3,5,2): 3 + 5 = 2^3, 3^3 + 5 = 2^5, 3 + 5^3 = 2^7$, we obtain two further anomalous cases:  
\begin{align*}
(d_1, d_2, c) &= (2,3,35):\\
&  2^3+3^3= 2^5+3=35,\\
(d_1, d_2, c) &= (2,5,133):\\
&  2^3+5^3= 2^7+5=133.  
\end{align*}     
At this point we have listed all known cases with $n=2$ for which (1.1) has exactly two solutions both of which have $X>1$ (see the conjecture in Section 3 of \cite{ScSt7}).  Noting that Lemma 3.2 of \cite{ScSt6} shows that members of the exceptional infinite family in the statement of Theorem 2 are the only cases in which (1.1) has more than one solution with $X=1$, we see that from here on it suffices to consider only anomalous cases giving two solutions exactly one of which has $X=1$. 
Here is an example:
\begin{align*}
(1.)  \quad (a, b, c) &= (5,11,56):\\
&  5^5 + 11=56^2, 5 \cdot 11 + 1=56.  \\
\end{align*}
Using Observation A we obtain 
\begin{align*}
(2.) \quad (a, c, c^r+a^{s+q}) &= (5,56,15681):\\
&  5^6+ 56=15681, 5 \cdot 56^2 + 1=15681.
\end{align*}
For this example we have an application of Observation A with $(a,b,c,q,r,s,t)=(5,11,56,1,1,5,2)$.  Other such examples are given by 
$(a,b,c,q,r,s,t) = (2,1,3, \allowbreak  1,1,3,2)$, $(3,5,2,1,4,3,5)$, $(5,3,\allowbreak 2, \allowbreak  1, \allowbreak  4, \allowbreak 3,7)$, from which we derive three new $(d_1, d_2, c)$ with $N=2$:  
$$(3.) \quad  (d_1, d_2, c) = (2,3,19), (3,2,97), (5,2,641).$$

Aside from (1.), (2.),  and (3.) just listed and the Pillai-related double solutions, the only remaining anomalous double solutions with $n=N=2$ that we are aware of are: 
\begin{align*}
(d_1, d_2, c) &= (2,11,3):\\
&  2^4+11=3^3, 2 \cdot 11^2+1=3^5,
\end{align*}
\begin{align*}
(d_1, d_2, c) &= (8,35,99):\\
&  8^2+35=99, 8 \cdot 35^2 + 1=99^2
\end{align*}
(we can clearly replace $d_1=8$ by $d_1=2$ as well in this example).  
\begin{align*}
(d_1, d_2, c) &= (10,41,411):\\
&  10^5 + 41^3=411^2, 10 \cdot 41 + 1=411.
\end{align*}

\bigskip

\noindent
{\bf Appendix 2: Cases where $d_1$ and $d_2$ are both prime}

\bigskip

We now consider the case $n=2$ where we require the $d_1$ and $d_2$ to be prime or prime powers.  

Several of the anomalous solutions listed in Appendix 1 have $d_1$ and $d_2$ prime.  When $2 \notdiv c$, solutions in the listed infinite families with $d_1$ and $d_2$ both primes or prime powers must have $\{ d_1 , d_2 \} = \{ 2^r, J \}$ where $r>0$ and $J$ is a Mersenne or Fermat prime or $J = 9$, except for solutions in the first infinite family listed in Appendix 1 with $d_1 = k = 2^r$, $r>1$, $m>2$.  

We examine the latter case ($d_1= k = 2^r$, $r>1$, and $d_2 = \frac{2^{mr} -1}{2^r - 1}$, $m>2$, with $d_2$ a prime or prime power).  If $mr = 6$, then, since we are taking $r>1$ and $m>2$, we must have $m=3$, $r=2$, which gives $d_1 = 4$ and $d_2 = 21$, which is not a prime or prime power.  So we can assume $mr \ne 6$, and apply Theorem V of an old paper of Birkhoff and Vandiver [{\it Ann.  Math.}, 2nd series, {\bf 5}, no. 4, July 1904, pp. 173--180] to show that $mr$ can have no divisors less than itself which do not also divide $r$.  This requires $m=p$, $r=p^t$ where $p$ is prime and $t \ge 1$.  We know of four instances when $(2^{p^{t+1}}-1)/(2^{p^t}-1)$ is prime:   $p=3$, $t=1$;  $p=3$, $t=2$;  $p=7$, $t=1$;  $p=59$, $t=1$.   We have calculated that no other $(2^{p^{t+1}}-1)/(2^{p^t}-1)$ is prime for primes $p \le 37$ and $d_2 < 10^{30000}$, also that $(2^{p^{2}}-1)/(2^{p}-1)$ is not prime for any prime $p \le 541$, except for the cases $p=3$, 7, or 59 already listed.

\end{document}